\documentclass{amsart}
\usepackage{cite,color,array}

\newcommand{\dash}{\text{-}}
\newcommand{\s}{\mathrm{st}}

\topmargin 0 pt
\textheight 46\baselineskip
\advance\textheight by \topskip
\setlength{\textwidth}{155mm}
\setlength{\oddsidemargin}{5.6mm}
\setlength{\evensidemargin}{5.6mm}

\newtheorem{theorem}{Theorem}[section]
\newtheorem{corollary}[theorem]{Corollary}
\newtheorem{lemma}[theorem]{Lemma}

\numberwithin{equation}{section}

\title{Recurrence relations for patterns of type $(2,1)$ in flattened permutations}

\author{Toufik Mansour}
\address{Department of Mathematics, University of Haifa, 31905 Haifa, Israel}
\email{tmansour@univ.haifa.ac.il}

\author{Mark Shattuck}
\address{Department of Mathematics, University of Tennessee, Knoxville, TN 37996}
\email{shattuck@math.utk.edu}

\author{David G.L. Wang*}
\address{Department of Mathematics, University of Haifa, 31905 Haifa, Israel}
\email{david.combin@gmail.com, wgl@math.haifa.ac.il}

\subjclass[2010]{11B37, 05A15, 05A05}

\keywords{pattern avoidance; recurrence relation; symmetric function; permutation}

\begin{document}

\begin{abstract}
We consider the problem of counting the occurrences of patterns of the form $xy\dash z$ within flattened permutations of a given length. Using symmetric functions, we find recurrence relations satisfied by the distributions on $\mathcal{S}_n$ for the patterns 12-3, 21-3, 23-1 and 32-1, and develop a unified approach to obtain explicit formulas.
By these recurrences, we are able to determine simple closed form expressions for the number of permutations that, when flattened, avoid one of these patterns as well as expressions for the average number of occurrences.  In particular, we find that
the average number of 23-1 patterns and the average number of 32-1 patterns
in $\text{Flatten}(\pi)$, taken over all permutations $\pi$ of the same length, are equal, as are the number of permutations avoiding either of these patterns.
We also find that the average number of 21-3 patterns
in $\text{Flatten}(\pi)$ over all $\pi$ is the same as it is for 31-2 patterns.
\end{abstract}
\maketitle

\section{Introduction}

The pattern counting problem for permutations has been studied extensively from various perspectives in both enumerative and algebraic combinatorics; see, e.g., \cite{EN, Ki}.
The comparable problem has also been considered on other discrete structures such as $k$-ary words \cite{BM}, compositions \cite{MSi}, and set partitions \cite {MSY} (see also \cite{HM} and the references contained therein).

In his recent study~\cite{C} on finite set partitions,
Callan introduced the notion of flattened partitions.
In a previous paper~\cite{MSW},
we considered flattened permutations in the same sense
and obtained formulas for the generating functions which count the flattened permutations of size $n$ according to the number of
peaks and valleys.
Here,
we continue this work for some related statistics on flattened permutations.

Let $[n]=\{1,2,\ldots,n\}$ if $n \geq 1$, with $[0]=\emptyset$.  Denote
the set of permutations of~$[n]$ by~$\mathcal{S}_n$.
Let $\pi=\pi_1\pi_2\cdots\pi_n\in\mathcal{S}_n$.
A {\em pattern} is any permutation $\sigma$ of shorter length,
and an {\em occurrence} of~$\sigma$ in~$\pi$ is a subsequence of~$\pi$
that is order-isomorphic to~$\sigma$.
If $r$ denotes the number of occurrences of a given pattern $\sigma$ within a permutation $\pi$ in general, then the case that has been studied most often in previous research has been when $r=0$, i.e., the avoidance of $\sigma$ by $\pi$.
Relatively little work has been done concerning the case when $r>0$, and in what has been done, the
patterns were usually of length three.
Simple algebraic maps show that
the six patterns of length three are classified
into two classes with respect to the pattern counting enumeration,
that is, the class with the representative pattern
$\sigma=123$ (see \cite{NZ12,NZ96}) and
the class with the representative pattern $\sigma=132$ (see \cite{MV02} and references therein).

By specifying the length of adjacent letters allowed,
Claesson and Mansour~\cite{CM02} further generalized
the notion of patterns.
Precisely, a pattern $\sigma=\sigma_1\dash\sigma_2\dash\cdots\dash\sigma_k$
is said to be of {\em type} $(\ell_1,\ell_2,\ldots,\ell_k)$
if the subword~$\sigma_i$ of~$\sigma$ has length~$\ell_i$.
In this notation,
a classical pattern of length~$k$ is of type $(1,1,\ldots,1)$
which consists of~$k$ occurrences of~$1$.
In particular, the permutation~$\pi$ is said to contain a pattern $\tau=xy\dash z$ of type $(2,1)$
if there exist indices $2\le i<j\le n$ such that $\pi_{i-1}\pi_i\pi_j$
is order-isomorphic to $xyz$,
where $xyz$ is some permutation of $\{1,2,3\}$.
Otherwise, we say that $\pi$ avoids $\tau$.

Let $\pi\in\mathcal{S}_n$ be a permutation of length~$n$
represented in its {\em standard cycle form},
that is,
cycles arranged from left to right in ascending order
according to the size of the smallest elements,
where the smallest element is written first within each cycle.
Define $\text{Flatten}(\pi)$ to be the permutation of length $n$
obtained by erasing the parentheses
enclosing the cycles of $\pi$ and considering the resulting word.
For example, if $\pi=71564328 \in \mathcal{S}_8$,
then the standard cycle form of $\pi$ is $(172)(3546)(8)$ and $\text{Flatten}(\pi)=17235468$.

One can combine the ideas of the previous two paragraphs and say that a permutation $\pi$ contains a pattern $\tau$ in the \emph{flattened sense} if and only if $\text{Flatten}(\pi)$ contains $\tau$ in the usual sense and avoids $\tau$ otherwise.  Here, we will use this definition of pattern containment and consider the case when $\tau$ is a pattern of type $(2,1)$.  For example, the permutation $\pi=71564328$ avoids $23\dash1$ but has four occurrences of $31\dash 2$ by this definition since $\text{Flatten}(\pi)=17235468$ avoids $23\dash1$ but has four occurrences of $31\dash2$.

Let $\s$ denote the statistic on $\mathcal{S}_n$ which records the number of occurrences in the flattened sense of one of five patterns under consideration in this paper.
In accordance with the previous paper~\cite{MSW},
we will use the notation
\[
g^\s_n(a_1a_2\cdots a_k)
=\sum_{\pi}q^{\s(\text{Flatten}(\pi))},
\]
where $\pi$ ranges over all permutations of length~$n$ such that
$\text{Flatten}(\pi)$ starts with $a_1a_2\cdots a_k$.
It is easy to see that $g^\s_n(a_1a_2\cdots a_k)=0$ if $a_1\ne1$.
We will write $g_n=g^\s_n(1)$ when considering a particular pattern.

In this paper, we use symmetric functions to develop recurrences for the generating functions $g^\s_n(1)$ in the cases when $\s$ is the statistic recording the number of occurrences of $\tau$, where $\tau$ is any pattern of type $(2,1)$ (except for $13\dash2$).
As a consequence, we obtain simple closed formulas for the number of permutations avoiding a pattern of type $(2,1)$ in the flattened sense as well as the average number of occurrences of a pattern over all permutations of a given size.  We provide algebraic proofs of these results, as well as combinatorial proofs in all but two cases.  The results are summarized in Table \ref{tab1} below.

\begin{table}[htdp]
\caption{The number of avoiding and the average number of occurrences for patterns of type $(2,1)$ over flattened permutations of length $n$.}
\begin{center}
\begin{tabular}{|c|c|c|c|}
\hline
pattern  & number of avoiding  & average number & reference
\\ \hline\rule{0pt}{.8cm}\raisebox{1.4ex}[0pt]
{13-2}
&\raisebox{1.4ex}[0pt]{$\displaystyle2^{n-1}$}
&\raisebox{1.4ex}[0pt]{$\displaystyle{n^2+3n+8\over12}-H_n$}
&\raisebox{1.4ex}[0pt]{\cite{MW13X}}
\\ \hline\rule{0pt}{.9cm}\raisebox{1.7ex}[0pt]
{31-2}
&\raisebox{1.7ex}[0pt]{$\displaystyle{2n-2\choose n-1}$ }
&
&\raisebox{1.7ex}[0pt]{Corollary \ref{cor:31-2:average}}
\\
\cline{1-1}\cline{2-2}\cline{4-4}\rule{0pt}{1.1cm}\raisebox{2.4ex}[0pt]{21-3}
&\raisebox{2.7ex}[0pt]{$\displaystyle2\sum_{k=1}^{n-1}kS(n-1,k)$}
&\raisebox{5.3ex}[0pt]{$\displaystyle{n^3-3n^2+26n-12\over 12n}-H_n$}
&\raisebox{2.4ex}[0pt]{Corollary \ref{cor:21-3:average}}
\\ \hline\rule{0cm}{.5cm}\raisebox{.08cm}[0pt]
{32-1}
&
&
&\raisebox{.08cm}[0pt]{Corollary \ref{cor:32-1:average}}
\\ \cline{1-1} \cline{4-4}\rule{0pt}{.5cm}\raisebox{.08cm}[0pt]
{23-1}
&\raisebox{.4cm}[0pt]{$\displaystyle\sum_{k=1}^{n-1}2^kS(n-1,k)$}
&\raisebox{.38cm}[0pt]{$\displaystyle{n^2-9n-4\over 12}+H_n$}
&\raisebox{.08cm}[0pt]{Corollary \ref{cor:23-1:average}}
\\ \hline\rule{0pt}{1cm}\raisebox{1.4ex}[0pt]
{12-3}
&\raisebox{2.4ex}[0pt]{$\displaystyle-2\sum_{i=0}^{n-2}{n-2\choose i}(B_i+B_{i+1})\tilde{B}_{n-i-3}$}
&\raisebox{2ex}[0pt]{$\displaystyle{n^3+3n^2-40n+24\over 12n}+H_n$}
&\raisebox{2.2ex}[0pt]{Corollary \ref{cor:12-3:average}}
\\ \hline
\end{tabular}
\end{center}
\label{table}
\end{table}

Note that as an immediate consequence, we have the following result.

\begin{theorem}
For any pattern $p$ of type $(2,1)$,
the average number $\mathrm{avr}(n)$ of occurrences of $p$ in
$\text{Flatten}(\pi)$ over all permutations~$\pi$ of length~$n$
satisfies
\[
\lim_{n\to\infty}{\mathrm{avr}(n)\over n^2}={1\over 12}.
\]
\end{theorem}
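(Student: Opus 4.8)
The plan is to read the result directly off the explicit closed forms for $\mathrm{avr}(n)$ recorded in Table~\ref{table}, since the theorem is presented as an immediate consequence of those formulas. The genuine work---deriving the closed forms---is carried out in the corollaries cited in the last column (together with the coincidences $\mathrm{avr}_{31\dash2}(n)=\mathrm{avr}_{21\dash3}(n)$ and $\mathrm{avr}_{23\dash1}(n)=\mathrm{avr}_{32\dash1}(n)$ asserted in the abstract), so here I would simply combine them and extract the dominant asymptotic term, uniformly over the six patterns of type $(2,1)$.

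First I would list the six averages. For $13\dash2$ it is $\frac{n^2+3n+8}{12}-H_n$; for $21\dash3$ and $31\dash2$ it is $\frac{n^3-3n^2+26n-12}{12n}-H_n$; for $23\dash1$ and $32\dash1$ it is $\frac{n^2-9n-4}{12}+H_n$; and for $12\dash3$ it is $\frac{n^3+3n^2-40n+24}{12n}+H_n$. In each case I would rewrite the rational part so that its leading behaviour is visible: for example $\frac{n^3-3n^2+26n-12}{12n}=\frac{1}{12}n^2-\frac14 n+\frac{13}{6}-\frac1n$, and analogously for $12\dash3$, while the remaining two formulas are already polynomial in $n$. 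The key observation is that in every one of the six cases the rational part equals $\frac{1}{12}n^2+O(n)$.

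Next I would handle the harmonic term. Using the elementary estimate $H_n=\ln n+\gamma+o(1)$, one has $H_n=O(\log n)$, so $H_n/n^2\to 0$ as $n\to\infty$; thus the $\pm H_n$ summand contributes nothing in the limit. Combining this with the previous paragraph, each average satisfies $\mathrm{avr}(n)=\frac{1}{12}n^2+O(n)$, whence
\[
\lim_{n\to\infty}\frac{\mathrm{avr}(n)}{n^2}=\frac{1}{12}+\lim_{n\to\infty}\frac{O(n)}{n^2}=\frac{1}{12}
\]
for all six patterns simultaneously.

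I do not expect any serious obstacle, precisely because all the difficulty has already been absorbed into the closed forms of Table~\ref{table}. The only points requiring a little care are verifying that the leading coefficient is exactly $\frac{1}{12}$ in each case and that no term---in particular the cubic numerators of the $21\dash3$, $31\dash2$ and $12\dash3$ formulas, which are divided by $12n$---grows faster than $n^2$; once the rational parts are reduced as above, both points are transparent, and the harmonic contribution is dispatched by the bound $H_n=O(\log n)$.
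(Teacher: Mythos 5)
Your proposal is correct and matches the paper's argument exactly: the paper presents this theorem as an ``immediate consequence'' of the closed-form averages in Table~\ref{table}, which is precisely what you do---read off the six formulas, observe that each rational part is $\tfrac{1}{12}n^2+O(n)$, and note that the $\pm H_n$ term is $O(\log n)$ and hence negligible after dividing by $n^2$. Your expansions (e.g.\ $\tfrac{n^3-3n^2+26n-12}{12n}=\tfrac{1}{12}n^2-\tfrac14 n+\tfrac{13}{6}-\tfrac1n$) and the deferral of the real work to the cited corollaries are exactly the intended reading.
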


We will need the following notation related to symmetric functions.
Let $X=\{x_1,x_2,\ldots,x_m\}$ be an ordered set.
Define
\begin{align*}
e_j(X)
&=\sum_{1\le i_1<i_2<\cdots< i_j\le m}x_{i_1}x_{i_2}\cdots x_{i_j},\\
e_j'(X)
&=\sum_{1\le i_1\le i_2-2\le i_3-4\le\cdots\le i_j-2(j-1)\le m}x_{i_1}x_{i_2}\cdots x_{i_j},\\
h_j(X)
&=\sum_{1\le i_1\le i_2\le\cdots\le i_j\le m}x_{i_1}x_{i_2}\cdots x_{i_j}.
\end{align*}
In other words,
$e_j(X)$ is the sum of products of any $j$ distinct elements of~$X$;
$e_j'(X)$ is the sum of products of any $j$ pairwise non-adjacent elements in~$X$;
$h_j(X)$ is the sum of products of any $j$ elements (non-distinct allowed) of~$X$.
For convenience, for any function $s_j(X)$ of these three,
let $s_0(X)=1$, $s_j(\emptyset)=\delta_{j,1}$ and $s_j(X)=0$ if $j<0$.

Throughout this paper,
we will make use of the Kronecker's delta notation $\delta_{i,j}$ defined by
\[
\delta_{i,j}=\begin{cases}
1,&\text{if $i=j$};\\
0,&\text{if $i\ne j$}.
\end{cases}
\]
We will follow the standard notation $[n]=\sum_{j=0}^{n-1}q^j={1-q^n\over 1-q}$ and
$[m]!=\prod_{i=1}^m[i]$, with
\[
{n\brack k}=\begin{cases}
{[n]!\over[k]![n-k]!},&\text{if }0\le k\le n;\\
0,&\text{otherwise},
\end{cases}•
\]
where $q$ is an indeterminate.  Note that $[n]\big|_{q=0}=1$ and $[n]\big|_{q=1}=n$.
Moreover, $[n]'|_{q=1}={n\choose 2}$.

Let $H_n=\sum_{k=1}^n{1\over k}$ denote the $n$-th harmonic number;
see Graham, Knuth and Patashnik~\cite{GKP94}.
Denote the Stirling number of the second kind by $S(n,k)$,
the $n$th Bell number by $B_n$, and
the $n$th complementary Bell number by $\tilde{B}_n$.

\section{Counting $31\dash2$-patterns}
For any $3\le i\le n$, we have
\begin{align}
g_n(1i)
&=\sum_{j<i}g_n(1ij)+\sum_{j>i}g_n(1ij)
=\sum_{j<i}q^{i-j-1}g_{n-1}(1j)+\sum_{j\ge i}g_{n-1}(1j)\notag\\
&=g_{n-1}+\sum_{j\le i-1}(q^{i-j-1}-1)g_{n-1}(1j).\label{rec1:31-2:g1i}
\end{align}

Define
\begin{align}
G_{n,r}(v)&=\sum_{i\ge2}g_{n,r}(1i)v^{i-2},\\
G_{r}(x,v)&=\sum_{n\ge 2}G_{n,r}(v)x^n=\sum_{n\ge 2}\sum_{i\ge2}g_{n,r}(1i)v^{i-2}x^{n}.
\end{align}
It follows that $G_{n,r}(1)=g_{n,r}$ and
$G_{r}(x,1)=\sum_{n\ge 2}g_{n,r}x^n$.
As before, we have $g_{n,r}(12)=2g_{n-1,r}$.

\begin{theorem}
For any integer $r\ge0$, we have
\begin{align}
\Bigl(1+{v^2x\over 1-v}\Bigr)G_{r}(x,v)
=x\sum_{j=0}^{r-1}v^{r-j+1}G_j(x,v)+{2-v\over 1-v}xG_{r}(x,1)+2(2+v)x^3\delta_{r,0}
+H_r(x,v),
\label{rec:31-2:Gr}
\end{align}
where
$H_r(x,v)=-x\sum_{s=0}^{r-1}\sum_{j\ge2}\sum_{n=3}^{j+r-s-1}g_{n,s}(1j)v^{j+r-s-1}x^n$.
\end{theorem}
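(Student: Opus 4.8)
The plan is to descend from the $q$-weighted recurrence \eqref{rec1:31-2:g1i} to the bivariate series $G_r(x,v)$ in three stages: extract the coefficient of $q^r$, apply the weight $v^{i-2}$ and sum over $i$, and finally apply $x^n$ and sum over $n$. First I would expand $g_n(1i)=\sum_{r\ge0}g_{n,r}(1i)q^r$ and read off from \eqref{rec1:31-2:g1i} the coefficient recurrence: for $3\le i\le n$,
\[
g_{n,r}(1i)=g_{n-1,r}+\sum_{2\le j\le i-1}\bigl(g_{n-1,\,r-i+j+1}(1j)-g_{n-1,r}(1j)\bigr),
\]
the two terms in the inner summand arising from the monomial $q^{i-j-1}$ and the subtracted $1$ in \eqref{rec1:31-2:g1i}, respectively.

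Next I would multiply by $v^{i-2}$ and sum over $i\ge2$, isolating the base case $i=2$ through the identity $g_{n,r}(12)=2g_{n-1,r}$. The geometric sum $\sum_{i\ge3}v^{i-2}=v/(1-v)$ absorbs the constant $g_{n-1,r}$; after interchanging the order of summation, the substitution $m=i-j-1$ converts the $q^{i-j-1}$ part into $v\sum_{m\ge0}v^{m}G_{n-1,\,r-m}(v)$, whose $m=0$ term combines with the $-v/(1-v)\,G_{n-1,r}(v)$ left by the subtracted $1$ to produce the coefficient $-v^2/(1-v)$ of $G_{n-1,r}(v)$. One is thus led to a recurrence of the form
\[
G_{n,r}(v)=\frac{2-v}{1-v}\,g_{n-1,r}-\frac{v^2}{1-v}\,G_{n-1,r}(v)+v\sum_{k=0}^{r-1}v^{r-k}G_{n-1,k}(v),
\]
valid once $n$ is beyond the initial cases. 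Multiplying by $x^n$, summing, and using $\sum_n g_{n-1,r}x^n=xG_r(x,1)$ together with $\sum_n G_{n-1,k}(v)x^n=xG_k(x,v)$ then yields the three principal terms of \eqref{rec:31-2:Gr}, after transposing the $G_{n-1,r}(v)$ contribution to the left to assemble the factor $1+v^2x/(1-v)$.

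The delicate point, which I expect to be the main obstacle, is that \eqref{rec1:31-2:g1i} holds only for $3\le i\le n$, while the manipulation above silently extends the inner sums to every $i\ge3$. I would therefore record $G_{n,r}(v)$ as the extended right-hand side minus the tail coming from $i>n$. In that tail the constant $g_{n-1,r}$ is annihilated by the cancellation $\sum_{j}g_{n-1,r}(1j)=g_{n-1,r}$, so only the $q^{i-j-1}$ contribution $\sum_{j}\sum_{i>n}g_{n-1,\,r-i+j+1}(1j)v^{i-2}$ remains; reindexing by $s=r-i+j+1$ (the degree index occurring in $g_{n-1,s}(1j)$) and summing against $x^n$ is exactly what assembles the correction $H_r(x,v)$. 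The smallest values of $n$, where the extended recurrence is unavailable, must then be examined directly, and this initial-condition analysis is what supplies the inhomogeneous term $2(2+v)x^3\delta_{r,0}$. Throughout, keeping the support constraint $2\le j\le n$ in view — so that the out-of-range triples $(s,j,n)$ contribute nothing — is the bookkeeping most prone to error, and I would reserve the bulk of the care for verifying that precisely the triples recorded in $H_r$ survive.
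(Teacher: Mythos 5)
Your proposal is correct and follows essentially the same route as the paper's proof: extract the coefficient of $q^r$ from \eqref{rec1:31-2:g1i}, weight by $v^{i-2}x^n$ and sum, with the reindexing $s=r-i+j+1$ producing the convolution $x\sum_{j=0}^{r-1}v^{r-j+1}G_j(x,v)$, the out-of-range terms with $i>n$ supplying exactly $H_r(x,v)$, and the $n=2$ initial data $g_{2,r}=2\delta_{r,0}$ supplying the term $2(2+v)x^3\delta_{r,0}$. The only difference is organizational --- you first form a fixed-$n$ recurrence for $G_{n,r}(v)$ and then sum against $x^n$, while the paper sums over $n\ge3$ and $3\le i\le n$ in a single step --- and the bookkeeping you flag as delicate is precisely where the paper's computation handles those same boundary contributions.
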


\begin{proof}
Let $r\ge0$. Extracting the coefficient of $q^r$ in~(\ref{rec1:31-2:g1i}) gives
\begin{equation}\label{eq:2.1}
g_{n,r}(1i)-g_{n-1,r}+\sum_{j=2}^{i-2}g_{n-1,r}(1j)-\sum_{j=2}^{i-2}g_{n-1,r-i+j+1}(1j)=0,
\quad 3\le i\le n,
\end{equation}
with $g_{2,r}=g_{2,r}(12)=2\delta_{r,0}$.  Multiplying each of the four terms on the left-hand side of (\ref{eq:2.1}) by ${v^{i-2}x^{n}}$, and summing over $n \geq 3$ and $3\le i\le n$, yields
\begin{align*}
&\sum_{n\ge 3}\sum_{i=3}^{n}g_{n,r}(1i)v^{i-2}x^{n}
=G_{r}(x,v)-2xG_r(x,1)-4x^{3}\delta_{r,0},\\
&\sum_{n\ge 3}\sum_{i=3}^{n}g_{n-1,r}v^{i-2}x^{n}
={vx\over 1-v}G_{r}(x,1)
-{x\over 1-v}G_{r}(vx,1)
+2vx^{3}\delta_{r,0},\\
&\sum_{n\ge 3}\sum_{i=3}^{n}\sum_{j=2}^{i-2}g_{n-1,r}(1j)v^{i-2}x^{n}
={x\over 1-v}\Bigl(v^2G_r(x,v)-G_{r}(vx,1)\Bigr),
\end{align*}
and
\begin{align*}
&\sum_{n\ge 3}\sum_{i=3}^{n}\sum_{j=i-1-r}^{i-2}g_{n-1,r-i+j+1}(1j)v^{i-2}x^{n}\\
=&x\sum_{n\ge 3}\sum_{i=4}^{n+1}\sum_{j=i-1-r}^{i-2}g_{n,r-i+j+1}(1j)v^{i-2}x^{n}\\
=&x\sum_{i\ge 4}\sum_{j=i-1-r}^{i-2}\sum_{n\ge i-1}g_{n,r-i+j+1}(1j)v^{i-2}x^{n}\\
=&x\sum_{s\le r-1}\sum_{j\ge2}\sum_{n\ge r+j-s}g_{n,s}(1j)v^{r+j-s-1}x^{n},
\end{align*}
which combine to give \eqref{rec:31-2:Gr}.
\end{proof}

Taking $r=0$ in recurrence~(\ref{rec:31-2:Gr}) gives
\begin{align*}
\Bigl(1+{v^2x\over 1-v}\Bigr)G_{0}(x,v)={2-v\over 1-v}xG_0(x)+2(2+v)x^3.
\end{align*}
To solve this equation, we use the kernel method and substitute $v=C(x)=\frac{1-\sqrt{1-2x}}{2x}$ to obtain $$G_0(x)=\frac{x}{\sqrt{1-4x}}-x-2x^2$$
and
\begin{align}\label{eqAAG0}
G_{0}(x,C(x))=\lim_{v\rightarrow C(x)}G_0(x,v)=\frac{x^2(\sqrt{1-4x}+8x-1)}{1-4x}.
\end{align}

Taking $r=1$ in (\ref{rec:31-2:Gr}) and using the fact that $H_1(x,v)=-x\sum_{j\ge3}g_{j,0}(1j)v^jx^j=-\frac{2x^4v^3}{1-xv}$, we obtain
\begin{align}
\Bigl(1+{v^2x\over 1-v}\Bigr)G_1(x,v)
=xv^2G_0(x,v)+{2-v\over 1-v}xG_1(x)-2\frac{x^4v^3}{1-xv}.
\end{align}
Substituting $v=C(x)=\frac{1-\sqrt{1-2x}}{2x}$ into this equation, and using (\ref{eqAAG0}), yields
$$G_1(x)=\frac{(3x-1)(1-5x+2x^2)+(1-6x+7x^2)\sqrt{1-4x}}{x\sqrt{(1-4x)^3}},$$
and thus
\begin{align*}
G_{1}(x,C(x))&=\lim_{v\rightarrow C(x)}G_1(x,v)\\
&= \frac{(1-4x)(5x^4-x^3-16x^2+8x-1)-(17x^4+19x^3-30x^2+10x-1)\sqrt{1-4x}}{x\sqrt{(1-4x)^5}}.
\end{align*}
Continuing in this way for $r=2,3$, we obtain the following result.

\begin{corollary}
For $0\le r\le 3$, we have
\[
G_r^{31\dash2}(x)=\frac{a_r(x)+b_r(x)\sqrt{1-4x}}{\sqrt{(1-4x)^{2r+1}}},
\]
where
\begin{itemize}
\item[(i)]
$a_0(x)=x$, $b_0(x)=-x-2x^2$;
\item[(ii)]
$a_1(x)=\frac{1}{x}(3x-1)(1-5x+2x^2)$, $b_1(x)=\frac{1}{x}(1-6x+7x^2)$;
\item[(iii)]
$a_2(x)=\frac{1}{x}(1-12x+50x^2-76x^3+22x^4)$, $b_2(x)=\frac{1}{x}(-1+10x-32x^2+28x^3)$;
\item[(iv)]
$a_3(x)=\frac{1}{x^2}(2-37x+270x^2-972x^3+1748x^4-1346x^5+220x^6)$, $b_3(x)=\frac{1}{x^2}(-2+33x-208x^2+614x^3-824x^4+368x^5)$.
\end{itemize}
\end{corollary}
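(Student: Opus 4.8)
The plan is to prove the formula by induction on $r$, solving the functional equation~(\ref{rec:31-2:Gr}) at each stage by the kernel method. Observe that the coefficient of $G_r(x,v)$ on the left-hand side of~(\ref{rec:31-2:Gr}) is $1+\frac{v^2x}{1-v}=\frac{v^2x-v+1}{1-v}$, whose numerator vanishes precisely when $v=C(x)=\frac{1-\sqrt{1-4x}}{2x}$, the Catalan kernel root. Substituting $v=C(x)$ therefore annihilates the term $\bigl(1+\frac{v^2x}{1-v}\bigr)G_r(x,v)$ and leaves a linear equation for the univariate series $G_r(x,1)=G_r^{31\dash2}(x)$, namely
\[
0=x\sum_{j=0}^{r-1}C(x)^{r-j+1}G_j(x,C(x))+\frac{2-C(x)}{1-C(x)}\,xG_r(x,1)+2(2+C(x))x^3\delta_{r,0}+H_r(x,C(x)).
\]
Each summand here involves only data with index strictly below $r$, so once $G_j(x,C(x))$ is known for all $j<r$ one may solve directly for $G_r(x,1)$.

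To close the induction I also need $G_r(x,C(x))$ for use at the next stage. Having determined $G_r(x,1)$, equation~(\ref{rec:31-2:Gr}) expresses $G_r(x,v)$ as an explicit rational expression in $v$; evaluating at $v=C(x)$ is a $0/0$ indeterminacy, since both the kernel factor and, after substitution of the value of $G_r(x,1)$, the remaining numerator vanish there. Hence $G_r(x,C(x))=\lim_{v\to C(x)}G_r(x,v)$ is obtained by a single application of L'Hôpital's rule in $v$. This is exactly the mechanism already carried out in the text for $r=0$ (yielding $a_0,b_0$ together with~(\ref{eqAAG0})) and for $r=1$ (yielding $a_1,b_1$); the cases $r=2$ and $r=3$ proceed identically, each producing $G_r(x,1)$ in the asserted form and then $G_r(x,C(x))$ for the following step.

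The one genuinely new ingredient at each stage is the correction term $H_r(x,v)$. From its definition $H_r(x,v)=-x\sum_{s=0}^{r-1}\sum_{j\ge2}\sum_{n=3}^{j+r-s-1}g_{n,s}(1j)v^{j+r-s-1}x^n$ it collects only the low-order boundary values $g_{n,s}(1j)$ with $n\le j+r-s-1$. Since $g_{n,s}(1j)=0$ unless $j\le n$, the inner range forces $n$ into the short window $j\le n\le j+r-s-1$, so that $H_r$ reduces to a finite combination of small, directly computable values. For $r=1$ this window collapses to $n=j$, giving $H_1(x,v)=-x\sum_{j\ge3}g_{j,0}(1j)v^jx^j=-\frac{2x^4v^3}{1-xv}$ as used above; for $r=2$ and $r=3$ one records analogously the finitely many surviving values of $g_{n,s}(1j)$ and assembles $H_2$ and $H_3$ in closed form.

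I expect the main obstacle to be the bookkeeping rather than any conceptual difficulty: correctly enumerating the surviving boundary terms entering $H_2$ and $H_3$, and then carrying the resulting nested-radical algebra through the kernel substitution and the L'Hôpital limit without error. The structural shape of the answer is, however, forced by the method, since the kernel root $C(x)$ and the L'Hôpital differentiation each introduce factors of $\sqrt{1-4x}$; tracking these shows that the denominator exponent increases by $2$ with each increment of $r$, which accounts for the uniform form $\frac{a_r(x)+b_r(x)\sqrt{1-4x}}{\sqrt{(1-4x)^{2r+1}}}$. Rationalizing $G_r(x,1)$ into this shape and reading off $a_r(x)$ and $b_r(x)$ then completes each case.
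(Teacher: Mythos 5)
Your proposal is correct and takes essentially the same route as the paper: the paper likewise iterates the kernel method, substituting the root $v=C(x)$ of $1-v+v^2x=0$ to solve for $G_r(x,1)$, recovering $G_r(x,C(x))=\lim_{v\to C(x)}G_r(x,v)$ from the resulting $0/0$ expression for use at the next stage, and evaluating $H_r$ from the boundary values $g_{n,s}(1j)$ (e.g.\ $H_1(x,v)=-\frac{2x^4v^3}{1-xv}$), doing this explicitly for $r=0,1$ and noting that $r=2,3$ proceed in the same way. The only cosmetic difference is that you write the kernel root correctly as $\frac{1-\sqrt{1-4x}}{2x}$, whereas the paper's text contains an evident typo ($\sqrt{1-2x}$ under the radical).
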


The second-order difference transformation of the above formula gives
\begin{equation}\label{rec:31-2:g1k}
g_n(1k)
=(q+1)g_n\bigl(1(k-1)\bigr)-q\cdotp g_n\bigl(1(k-2)\bigr)+(q-1)g_{n-1}\bigl(1(k-2)\bigr),
\quad 5\le k\le n.
\end{equation}•
We shall solve it with the initial values
\begin{equation}\label{ini:31-2:g1k}
\begin{split}
g_n(13)&=g_{n-1},\quad n\ge3,\\
g_n(14)&=g_{n-1}+2(q-1)g_{n-2},\quad n\ge4.
\end{split}
\end{equation}

\begin{theorem}
For any $n\ge2$, we have
\begin{equation}\label{rec:31-2:gn}
g_n=\sum_{j=1}^{\lfloor n/2\rfloor}(q-1)^{j-1}b_{n,j}g_{n-j},•
\end{equation}•
where
\[
b_{n,j}=\sum_{k=0}^{n-j-1}{n-k\over j}{n-j-1-k\choose j-1}{j-2+k\choose j-2}q^k.
\]
\end{theorem}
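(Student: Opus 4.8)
The plan is to encode the three-term recurrence \eqref{rec:31-2:g1k} in a single generating polynomial in an auxiliary variable $v$ and then read off $g_n$ by specialization. Set $F_n(v)=\sum_{k\ge2}g_n(1k)v^{k-2}$, a polynomial of degree $n-2$ with $F_n(1)=\sum_{k\ge2}g_n(1k)=g_n$. First I would multiply \eqref{rec:31-2:g1k} by $v^{k-2}$ and sum over $k\ge5$, shifting the index in each of the three right-hand sums so that they are re-expressed through $F_n(v)$ and $F_{n-1}(v)$; the omitted terms $k\le4$ contribute boundary polynomials of degree at most $2$ in $v$, which I would evaluate using \eqref{ini:31-2:g1k} together with $g_n(12)=2g_{n-1}$ and $g_{n-1}(12)=2g_{n-2}$.

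I expect the boundary terms to collapse neatly: the coefficients of $v^0,v^1,v^2$ combine, after the contribution $2(q-1)g_{n-2}$ from $g_n(14)$ cancels the $-(q-1)v^2g_{n-1}(12)$ coming from the inhomogeneous sum, into $g_{n-1}\bigl(2-(2q+1)v+qv^2\bigr)=g_{n-1}(2-v)(1-qv)$. This should yield the functional equation
\[
(1-v)(1-qv)\,F_n(v)=(2-v)(1-qv)\,g_{n-1}+(q-1)v^2\,F_{n-1}(v),\qquad n\ge3,
\]
with base value $F_2(v)=2$. Writing $R(v)=\frac{2-v}{1-v}$ and $A(v)=\frac{(q-1)v^2}{(1-v)(1-qv)}$, this is $F_n(v)=R(v)g_{n-1}+A(v)F_{n-1}(v)$, which I would iterate down to $F_2$ to get $F_n(v)=\sum_{j=1}^{n-2}R(v)A(v)^{j-1}g_{n-j}+2A(v)^{n-2}$.

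Next I would recover $g_n=F_n(1)$. Because $F_n$ is a polynomial of degree $n-2$, one has $g_n=\sum_{\ell=0}^{n-2}[v^\ell]F_n(v)$, so only the coefficients of $v^0,\dots,v^{n-2}$ are relevant. Since $A(v)=O(v^2)$, the tail term $2A(v)^{n-2}=O(v^{2n-4})$ is invisible in this range for $n\ge3$, and I may replace $F_n(v)$ by the formal power series $\sum_{j\ge1}R(v)A(v)^{j-1}g_{n-j}$ when extracting these coefficients. Using $\sum_{\ell=0}^{n-2}[v^\ell]\phi(v)=[v^{n-2}]\frac{\phi(v)}{1-v}$ and pulling the scalar $(q-1)^{j-1}$ out of $A(v)^{j-1}$, this produces $g_n=\sum_{j\ge1}(q-1)^{j-1}b_{n,j}g_{n-j}$ with
\[
b_{n,j}=[v^{n-2j}]\frac{2-v}{(1-v)^{j+1}(1-qv)^{j-1}},
\]
and the coefficient vanishes once $2j>n$, which is exactly the range $1\le j\le\lfloor n/2\rfloor$.

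Finally I would expand this one-variable coefficient into the stated double sum. Inserting $\frac{1}{(1-v)^{j+1}}=\sum_a\binom{a+j}{j}v^a$ and $\frac{1}{(1-qv)^{j-1}}=\sum_b\binom{b+j-2}{j-2}q^bv^b$, the coefficient of $q^k$ in $b_{n,j}$ works out to $\bigl(2\binom{n-j-k}{j}-\binom{n-j-1-k}{j}\bigr)\binom{j-2+k}{j-2}$, so the claim reduces to the identity $2\binom{N-j}{j}-\binom{N-j-1}{j}=\frac{N}{j}\binom{N-j-1}{j-1}$ with $N=n-k$; this follows from Pascal's rule $\binom{N-j}{j}=\binom{N-j-1}{j}+\binom{N-j-1}{j-1}$ combined with $\binom{N-j-1}{j}=\frac{N-2j}{j}\binom{N-j-1}{j-1}$. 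The step I expect to require the most care is the coefficient extraction in the previous paragraph: one must argue that $g_n=F_n(1)$ can be computed term by term from the iterated expansion of $F_n$, whose individual summands are singular at $v=1$, and it is precisely the bound $\deg F_n\le n-2$ together with the $O(v^{2n-4})$ size of the discarded tail that makes this legitimate despite the poles of $R$ and $A$.
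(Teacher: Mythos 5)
Your derivation rests on the functional equation
\[
(1-v)(1-qv)F_n(v)=(2-v)(1-qv)g_{n-1}+(q-1)v^2F_{n-1}(v),\qquad n\ge3,
\]
and this equation is false; the error enters exactly where you sum \eqref{rec:31-2:g1k} ``over $k\ge5$''. That recurrence is valid only for $5\le k\le n$, so your summation tacitly assumes it also holds at $k=n+1$ and $k=n+2$ (with $g_n(1k)=0$ for $k>n$), which it does not: you treat the bottom boundary $k\le4$ carefully (that part of your computation, giving $g_{n-1}(2-v)(1-qv)$, is correct) but omit the top boundary. That the displayed equation cannot hold is already clear from degrees: its left side is a polynomial of degree $n$ in $v$ with leading coefficient $q\,g_n(1n)\neq0$, while its right side has degree at most $n-1$. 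Concretely, for $n=3$ one has $F_3(v)=2g_2+g_2v$ and $F_2(v)=g_2$, and the two sides differ by $-q\,g_2\,v^2(1-v)$. The correct statement carries a defect polynomial $D_n(v)$ supported in degrees $n-1$ and $n$: comparing coefficients of $v^m$ for $0\le m\le n-2$ uses only $g_n(12)=2g_{n-1}$, the initial values \eqref{ini:31-2:g1k}, and the valid instances $k=m+2\le n$ of \eqref{rec:31-2:g1k}, whereas the coefficients of $v^{n-1}$ and $v^{n}$ encode precisely the two invalid instances.

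The gap is repairable by the same device you already invoke to discard the tail $2A(v)^{n-2}$. After division by $(1-v)(1-qv)$ the defect contributes $\widetilde{D}_n=D_n/\bigl((1-v)(1-qv)\bigr)=O(v^{n-1})$, and under your iteration the accumulated corrections $A(v)^{j-1}\widetilde{D}_{n-j+1}(v)$ are $O(v^{2(j-1)}\cdot v^{n-j})=O(v^{n+j-2})=O(v^{n-1})$, hence invisible to the coefficients $[v^\ell]$, $\ell\le n-2$, which are all you use. With that patch, the congruence $F_n\equiv\sum_{j\ge1}RA^{j-1}g_{n-j}\pmod{v^{n-1}}$, the extraction $b_{n,j}=[v^{n-2j}]\frac{2-v}{(1-v)^{j+1}(1-qv)^{j-1}}$ (zero when $2j>n$), and your closing identity $2\binom{N-j}{j}-\binom{N-j-1}{j}=\frac{N}{j}\binom{N-j-1}{j-1}$ are all correct and do reproduce the stated $b_{n,j}$. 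Note that the repaired argument ends up being a one-variable repackaging of the paper's proof: the paper writes $g_n(1k)=\sum_j a_{k,j}(q-1)^{j-1}g_{n-j}$ with coefficients $a_{k,j}$ independent of $n$, uses the three-term recurrence only inside its valid range (thereby never meeting the truncation problem), sums to $b_{n,j}=\sum_{k=3}^n a_{k,j}$, and computes $B(x,y)=\frac{2-x}{(1-x)^2}\cdot\frac{1}{(1-x)(1-qx)-x^2y}$, whose coefficient of $y^{j-2}$ is exactly your generating function for $b_{n,j}$. So either include $D_n(v)$ explicitly, or assert the functional equation only modulo terms of degree $\ge n-1$; as written, the proof hinges on a false identity.
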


\begin{proof}
For any $k\ge3$ and any integer $j$, define $a_{k,j}$ by
$a_{3,j}=\delta_{j,1}$,
$a_{4,j}=\delta_{j,1}+2\delta_{j,2}$ and
\begin{equation}\label{def:31-2:a}
a_{k,j}=(q+1)a_{k-1,j}-q\cdotp a_{k-2,j}+a_{k-2,j-1},\quad k\ge 5.
\end{equation}
By~(\ref{rec:31-2:g1k}) and~(\ref{ini:31-2:g1k}), it is routine to verify that
\[
g_n(1k)=\sum_{j\le \lfloor k/2\rfloor}a_{k,j}(q-1)^{j-1}g_{n-j},
\quad 3\le k\le n.
\]
Consequently, for any $n\ge2$, we have
\[
g_n=\sum_{k\ge2}g_n(1k)
=ng_{n-1}+\sum_{j=2}^{\lfloor n/2\rfloor}(q-1)^{j-1}b_{n,j}g_{n-j},
\]
where $b_{n,j}=\sum_{k=3}^na_{k,j}$.
By~(\ref{def:31-2:a}), we have
\[
b_{n,j}
=(q+1)b_{n-1,\,j}
-qb_{n-2,\,j}
+b_{n-2,\,j-1}
+2\chi(j=2\text{ and }n\ge4),\quad j\ge2.
\]
It follows that
\begin{align*}
B(x,y)=\sum_{n\ge4}\sum_{j\ge2}b_{n,j}x^{n-4}y^{j-2}
={2-x\over(1-x)^2}\cdot{1\over(1-x)(1-qx)-x^2y}.
\end{align*}
We obtain the desired expression of $b_{n,j}$ by extracting the coefficient of
$x^{n-4}y^{j-2}$ from $B(x,y)$.
\end{proof}

\begin{corollary}\label{cor:31-2:average}
For any $n\ge1$,
the number of permutations~$\pi$ of length~$n$ with $\text{Flatten}(\pi)$ avoiding $31\dash2$
is ${2n-2\choose n-1}$, and
the average number of
occurrences of $31\dash2$ in $\text{Flatten}(\pi)$ over $\pi\in\mathcal{S}_n$ is given by
${n^3-3n^2+26n-12\over 12n}-H_n$.
\end{corollary}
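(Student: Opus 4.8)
The plan is to extract both numerical quantities—the avoidance count and the average number of occurrences—directly from the generating function $g_n = g_n^{31\text{-}2}(q)$ governed by Theorem~2.3, specializing the indeterminate $q$ appropriately. Recall that by construction $g_n = \sum_\pi q^{\s(\text{Flatten}(\pi))}$, so that $g_n\big|_{q=0}$ counts permutations whose flattening avoids $31\text{-}2$, while $\frac{1}{n!}\,g_n'\big|_{q=1}$ gives the average number of occurrences over $\mathcal{S}_n$. The recurrence \eqref{rec:31-2:gn} expresses $g_n$ in terms of $g_{n-j}$ with coefficients $(q-1)^{j-1}b_{n,j}$, and the key observation is that the factor $(q-1)^{j-1}$ makes the specializations at $q=0$ and (after differentiation) at $q=1$ collapse to only a few surviving terms.

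First I would handle avoidance by setting $q=0$. Since $[n]\big|_{q=0}=1$, each $b_{n,j}$ reduces to a plain binomial-type sum, and more importantly $(q-1)^{j-1}\big|_{q=0}=(-1)^{j-1}$, so \eqref{rec:31-2:gn} becomes a clean linear recurrence $g_n\big|_{q=0}=\sum_{j}(-1)^{j-1}b_{n,j}\big|_{q=0}\,g_{n-j}\big|_{q=0}$. I would then verify by induction that $g_n\big|_{q=0}=\binom{2n-2}{n-1}$ satisfies this recurrence, either by summing the closed form for $b_{n,j}\big|_{q=0}$ against the central binomial coefficients or, more cleanly, by passing to the generating function $\sum_n g_n\big|_{q=0}\,x^n$ using the kernel-method expression for $G_0^{31\text{-}2}(x)$ from Corollary~2.2(i). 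Indeed $G_0(x)=\frac{x}{\sqrt{1-4x}}-x-2x^2$ is essentially the generating function for central binomial coefficients, and reading off its coefficients should immediately yield $\binom{2n-2}{n-1}$ after adjusting for the initial terms encoded by $G_r(x,v)=\sum_{n\ge2}\cdots$.

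For the average, I would work at $q=1$. Since $[n]\big|_{q=1}=n$, differentiating \eqref{rec:31-2:gn} with respect to $q$ and then setting $q=1$ kills every term with $j\ge3$ (because of the factor $(q-1)^{j-1}$ and its derivative), leaving only the $j=1$ and $j=2$ contributions. The $j=1$ term contributes $b_{n,1}\big|_{q=1}\,g_{n-1}$ together with $n\,g_{n-1}'$, while the $j=2$ term contributes $b_{n,2}\big|_{q=1}\,g_{n-2}\big|_{q=1}=b_{n,2}\big|_{q=1}\,(n-2)!$ from the derivative of $(q-1)$. Evaluating $b_{n,1}\big|_{q=1}$ and $b_{n,2}\big|_{q=1}$ from the closed form (using $[n]'\big|_{q=1}=\binom{n}{2}$ where the $q^k$ weights are differentiated), I expect to obtain a first-order recurrence for the total occurrence count $T_n := g_n'\big|_{q=1}$, of the shape $T_n = n\,T_{n-1} + c_n\,(n-2)!$ for an explicit polynomial $c_n$. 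Dividing through by $n!$ turns this into a telescoping recurrence for the average $\mathrm{avr}(n)=T_n/n!$, whose solution involves a harmonic sum—explaining the appearance of $H_n$ in the claimed formula $\frac{n^3-3n^2+26n-12}{12n}-H_n$.

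The main obstacle will be the bookkeeping in the $q=1$ computation: correctly differentiating the two-variable coefficient $b_{n,j}$, whose $q^k$-weighted sum must be differentiated term by term, and then verifying that the resulting first-order recurrence for $\mathrm{avr}(n)$ integrates to exactly the stated cubic-over-$n$ expression minus $H_n$. I would guard against errors by checking the formula against small cases $n=1,2,3,4$ computed directly from the recurrence, and by confirming consistency with Theorem~1.1, namely $\mathrm{avr}(n)/n^2 \to \tfrac{1}{12}$, which the leading term $\frac{n^3}{12n}=\frac{n^2}{12}$ visibly satisfies.
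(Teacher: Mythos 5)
Your proposal is correct, but it follows a genuinely different route from the paper's own proof of this corollary. The paper never extracts Corollary \ref{cor:31-2:average} from the recurrence \eqref{rec:31-2:gn}: both claims are proved combinatorially in Section 5 --- the avoidance count by showing that $\text{Flatten}(\pi)$ avoids $31\dash2$ if and only if it avoids the classical pattern $3\dash1\dash2$, and then citing the known count $\binom{2n-2}{n-1}$ from \cite{MS}; the average by directly counting the $i$-occurrences of $31\dash2$ over all of $\mathcal{S}_n$ (there are $(n-i)\bigl(\binom{i}{2}-1\bigr)\frac{(n-1)!}{i}$ of them) and summing over $i$. Your plan instead transplants to $31\dash2$ the algebraic template the paper applies to the other four patterns (Corollaries \ref{cor:32-1:average}, \ref{cor:12-3:average}, \ref{cor:23-1:average}, \ref{cor:21-3:average}), and it does go through. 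At $q=0$, the coefficient-extraction route via $G_0(x)=\frac{x}{\sqrt{1-4x}}-x-2x^2$ is indeed the cleaner one, since induction on the $q=0$ recurrence would require the non-obvious identity $\binom{2n-2}{n-1}=\sum_{j\ge1}(-1)^{j-1}\frac{n}{j}\binom{n-j-1}{j-1}\binom{2n-2j-2}{n-j-1}$; just note that the kernel-method expression encodes $g_{n,0}$ only for $n\ge 3$, so $n=1,2$ must be checked separately. At $q=1$, since $b_{n,1}=n$ is constant in $q$ (so its derivative drops out, tidying your slightly garbled description of the $j=1$ contribution) and $b_{n,2}\big|_{q=1}=\frac{(n+2)(n-2)(n-3)}{6}$, differentiation yields $g_n'(1)=ng_{n-1}'(1)+\frac{(n+2)(n-2)(n-3)}{6}\,(n-2)!$, which is exactly the recurrence the paper solves in the $21\dash3$ case --- consistent with the two patterns sharing the same average --- and telescoping $g_n'(1)/n!$ gives the stated formula. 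As for what each approach buys: yours is self-contained within the Section 2 machinery and makes the treatment uniform across all five patterns, whereas the paper's combinatorial argument explains the numbers, deriving the Catalan-type count from equivalence with $3\dash1\dash2$-avoidance and exposing structurally why $21\dash3$ and $31\dash2$ have equal averages.
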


\section{Recurrence in terms of symmetric functions}

\subsection{Counting $32\dash1$-patterns}

Let $3\le i\le n$. We have
\begin{align}
g_n(1i)
&=\sum_{j<i}g_n(1ij)+\sum_{j>i}g_n(1ij)
=\sum_{j<i}q^{j-2}g_{n-1}(1j)+\sum_{j\ge i}g_{n-1}(1j)\notag\\
&=g_{n-1}+\sum_{j\le i-1}(q^{j-2}-1)g_{n-1}(1j).\label{rec1:32-1:g1i}
\end{align}

Define
\begin{align}
G_{n,r}(v)&=\sum_{i}g_{n,r}(1i)v^{i-2},\\
G_{r}(x,v)&=\sum_{n\ge 3}G_{n,r}(v)x^n=\sum_{n\ge 3}\sum_{i}g_{n,r}(1i)v^{i-2}x^{n}.
\end{align}
It follows that $G_{n,r}(1)=g_{n,r}$ and
$G_{r}(x,1)=\sum_{n\ge 3}g_{n,r}x^n$.
As before, we have $g_{n,r}(1i)=2g_{n-1,r}$.

\begin{lemma}
For any integer $r\ge0$, we have
\begin{align}
\Bigl(1+{vx\over 1-v}\Bigr)G_{r}(x,v)
=&{x(2-v+2vx)\over 1-v}G_{r}(x,1)
-{2vx^2\over 1-v}G_r(vx,1)\notag\\
&+2x^3(2+v+2vx+2v^2x)\delta_{r,0}
+H_r(x,v),
\label{rec:32-1:Gr}
\end{align}
where
$H_r(x,v)={x\over 1-v}\sum_{n\ge3}\sum_{j=3}^ng_{n,r-j+2}(1j)x^{n}(v^{j-1}-v^{n})$.
\end{lemma}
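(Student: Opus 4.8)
The plan is to mirror the computation carried out for $31\dash2$ in the first theorem of this paper, adapting it to the recurrence~(\ref{rec1:32-1:g1i}). First I would extract the coefficient of $q^r$ from~(\ref{rec1:32-1:g1i}). The crucial structural observation is that the summand $(q^{j-2}-1)g_{n-1}(1j)$ vanishes when $j=2$, so the inner sum effectively runs over $3\le j\le i-1$; this is the exact analogue of the vanishing $j=i-1$ term in the $31\dash2$ case, and it is what forces the lower index in $H_r$ to begin at $j=3$. Extracting $[q^r]$ then yields, for $3\le i\le n$, the linear recurrence
\[
g_{n,r}(1i)=g_{n-1,r}+\sum_{j=3}^{i-1}g_{n-1,r-j+2}(1j)-\sum_{j=3}^{i-1}g_{n-1,r}(1j),
\]
with the initial datum $g_{2,r}=g_{2,r}(12)=2\delta_{r,0}$.

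Next I would multiply each of the four terms by $v^{i-2}x^n$ and sum over $n\ge3$ and $3\le i\le n$, exactly as in the proof of the first theorem. The term coming from $g_{n,r}(1i)$ rebuilds $G_r(x,v)$ once one restores and then subtracts the missing $i=2$ contribution $g_{n,r}(12)=2g_{n-1,r}$, producing $G_r(x,v)-2xG_r(x,1)-4\delta_{r,0}x^3$. The term from $g_{n-1,r}$ is handled by the geometric sum $\sum_{i=3}^nv^{i-2}=(v-v^{n-1})/(1-v)$ and yields $G_r(x,1)$ and $G_r(vx,1)$ pieces together with a $\delta_{r,0}$ boundary term, simplified using $(v-v^2)/(1-v)=v$. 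For the two inner double sums I would interchange the order of summation, writing the geometric factor as $\sum_{i=j+1}^nv^{i-2}=(v^{j-1}-v^{n-1})/(1-v)$; the sum attached to $g_{n-1,r}(1j)$ collapses into expressions in $G_r(x,v)$, $G_r(x,1)$, and $G_r(vx,1)$, while the sum attached to $g_{n-1,r-j+2}(1j)$, after the reindexing $m=n-1$, becomes precisely $H_r(x,v)$.

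Finally I would assemble the four pieces according to the signs in the extracted recurrence and collect like terms. The $G_r(x,v)$ contributions combine to give the prefactor $1+vx/(1-v)$; the $G_r(x,1)$ terms combine via $2x+vx(1+2x)/(1-v)=x(2-v+2vx)/(1-v)$; the $G_r(vx,1)$ terms cancel down to $-2vx^2/(1-v)\,G_r(vx,1)$; and the scattered $\delta_{r,0}$ boundary terms consolidate, using $v^3-v=-v(1-v)(1+v)$, into $2x^3(2+v+2vx+2v^2x)\delta_{r,0}$, which is exactly~(\ref{rec:32-1:Gr}). The main obstacle I anticipate is purely bookkeeping: keeping the shifted $r$-index $r-j+2$ intact through the interchange of summation and the substitution $m=n-1$ so that it emerges as the stated $H_r$, and tracking all of the $\delta_{r,0}$ contributions coming from the $n=2$ initial value, since these boundary terms are easy to drop or double-count. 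No genuinely new idea beyond the $31\dash2$ computation is required; the only real change is that the $q$-exponent here depends on $j$ alone rather than on $i-j$, which shifts the location of the vanishing summand and thereby reshapes $H_r$.
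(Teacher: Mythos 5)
Your proposal is correct and follows essentially the same route as the paper: extract the coefficient of $q^r$ from~(\ref{rec1:32-1:g1i}), multiply by $v^{i-2}x^n$, sum over $n\ge3$ and $3\le i\le n$, interchange the order of summation via geometric sums, and assemble the four resulting pieces into~(\ref{rec:32-1:Gr}). The only cosmetic difference is that you discard the vanishing $j=2$ terms at the outset (so $H_r$ emerges directly from the fourth sum and the extra $G_r(x,1)$, $G_r(vx,1)$, and $\delta_{r,0}$ contributions migrate to the third), whereas the paper keeps $j=2$ in both inner sums and splits it off from the fourth sum at the end; both bookkeeping conventions yield the identical identity.
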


\begin{lemma}
For any integer $r\ge0$, we have
\begin{align}
(1-v+vx)G_{r}(x,v)
=&x(2-v+2vx)G_{r}(x,1)
-2vx^2G_r(vx,1)\notag\\
&+2x^3(1-v)(2+v+2vx+2v^2x)\delta_{r,0}
+H_r(x,v),
\label{rec:32-1:Gr}
\end{align}
where
$H_r(x,v)=\sum_{n\ge3}\sum_{j=3}^ng_{n,r-j+2}(1j)x^{n+1}(v^{j-1}-v^{n})$.
\end{lemma}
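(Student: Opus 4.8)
The plan is to mirror the generating-function derivation already carried out for the $31\dash2$ pattern, the only structural change being that the $q$-weight attached to $g_{n-1}(1j)$ in \eqref{rec1:32-1:g1i} is $q^{j-2}$, depending on $j$ alone, rather than the two-index weight $q^{i-j-1}$ appearing in \eqref{rec1:31-2:g1i}. Concretely, I would first extract the coefficient of $q^r$ from \eqref{rec1:32-1:g1i}. Since $[q^r]\,q^{j-2}g_{n-1}(1j)=g_{n-1,\,r-j+2}(1j)$, this yields the scalar recurrence
\[
g_{n,r}(1i)=g_{n-1,r}+\sum_{j=2}^{i-1}\bigl(g_{n-1,\,r-j+2}(1j)-g_{n-1,r}(1j)\bigr),\qquad 3\le i\le n,
\]
to be read together with the initial value $g_{n,r}(1i)=2g_{n-1,r}$ recorded before the lemma and the base cases that feed the $\delta_{r,0}$ term. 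This is the exact analog of \eqref{eq:2.1} in the $31\dash2$ case.

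Next I would multiply each term by $v^{i-2}x^{n}$ and sum over $n\ge3$ and $3\le i\le n$. The left-hand side reassembles into $G_r(x,v)$ up to low-order corrections accounting for $i=2$ and small $n$. On the right, the constant-in-$j$ contribution $g_{n-1,r}$ produces, after the geometric summation $\sum_{i=3}^{n}v^{i-2}=\tfrac{v-v^{n-1}}{1-v}$ and the shift $n\mapsto n-1$, both a $G_r(x,1)$ term and a $G_r(vx,1)$ term, the latter coming precisely from the truncation tail $v^{n-1}$ (since $\sum_n g_{n-1,r}v^{n-1}x^n=G_r(vx,1)$). The subtracted term $-\sum_j g_{n-1,r}(1j)$ contributes, after interchanging the order of summation, a further multiple of $\tfrac{v}{1-v}G_r(x,v)$ together with an additional $G_r(vx,1)$ piece; collecting all copies of $G_r(x,v)$ onto the left and the two $G_r(x,1)$, $G_r(vx,1)$ sources on the right reproduces the coefficients $\tfrac{x(2-v+2vx)}{1-v}$ and $-\tfrac{2vx^2}{1-v}$. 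The genuinely $r$-dependent term $\sum_j g_{n-1,\,r-j+2}(1j)$ does not collapse and is set aside as the inhomogeneity $H_r(x,v)$; its inner $v$-sum $\sum_{i>j}v^{i-2}=\tfrac{v^{j-1}-v^{n-1}}{1-v}$ is exactly what generates the displayed factor $v^{j-1}-v^{n}$ after the shift $n\mapsto n-1$. This gives the first (fractional) form of the recurrence.

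Finally, the stated form of the lemma is obtained by multiplying that identity through by $1-v$: the left factor $(1-v)\bigl(1+\tfrac{vx}{1-v}\bigr)$ collapses to $1-v+vx$, the $\tfrac{1}{1-v}$ denominators on the right cancel, the $\delta_{r,0}$ term acquires the explicit factor $1-v$, and the prefactor $\tfrac{x}{1-v}$ inside $H_r(x,v)$ becomes $x$, turning $x^{n}$ into $x^{n+1}$. No new combinatorics enters at this cosmetic step.

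I expect the main obstacle to be the careful bookkeeping of the boundary and inhomogeneous pieces, rather than the bulk summation. Pinning down the $\delta_{r,0}$ coefficient $2x^3(1-v)(2+v+2vx+2v^2x)$ requires tracking the several low-order $n$ contributions (including the $g_{n,r}(12)=2g_{n-1,r}$ initial value), and getting $H_r(x,v)$ correct means keeping the summation range $3\le j\le n$, the index shift $r\mapsto r-j+2$, and the sign in $v^{j-1}-v^{n}$ all consistent. These are the places where the two truncated geometric tails in $v$ interact with the $q^{j-2}$ weighting; the remainder is the routine generating-function manipulation already illustrated for $31\dash2$.
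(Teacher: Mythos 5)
Your plan follows the paper's proof in outline (the same four-sum expansion of the coefficient-of-$q^r$ identity, followed by clearing the $1-v$ denominators), but the coefficient bookkeeping you describe is wrong at the one genuinely non-routine step, and as written it cannot produce the stated identity. You set aside the entire $r$-shifted sum $\sum_{j=2}^{i-1}g_{n-1,r-j+2}(1j)$ as ``the inhomogeneity $H_r(x,v)$''. But the $H_r$ of the lemma runs only over $j\ge 3$: the $j=2$ term cannot stay inside it, and it is not negligible. Since $g_{n,r-2+2}(12)=g_{n,r}(12)=2g_{n-1,r}$, that term collapses to known generating functions; after the interchange of summation (and using $g_{1,r}=\delta_{r,0}$, $g_{2,r}=2\delta_{r,0}$) it contributes
\[
\frac{2vx^2}{1-v}\bigl(G_r(x,1)-G_r(vx,1)\bigr)+2vx^3(1+2x+2vx)\,\delta_{r,0},
\]
which is precisely the source of the $2vx$-part of the coefficient $x(2-v+2vx)$ of $G_r(x,1)$, of the entire term $-2vx^2G_r(vx,1)$, and of most of the $\delta_{r,0}$ coefficient. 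This splitting of the fourth sum is exactly what the paper's proof does, and your proposal never performs it.

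Consequently, your claim that collecting the $G_r(x,1)$ and $G_r(vx,1)$ pieces produced by the first three sums ``reproduces the coefficients $\tfrac{x(2-v+2vx)}{1-v}$ and $-\tfrac{2vx^2}{1-v}$'' is false. Those sums give $2xG_r(x,1)+4x^3\delta_{r,0}$ (the $i=2$ correction), $\tfrac{vx}{1-v}G_r(x,1)-\tfrac{x}{1-v}G_r(vx,1)+2vx^3\delta_{r,0}$ (from the $g_{n-1,r}$ term), and $\tfrac{vx}{1-v}G_r(x,v)-\tfrac{x}{1-v}G_r(vx,1)+2vx^3\delta_{r,0}$ (from $\sum_j g_{n-1,r}(1j)$); once everything is moved across the equation, the two $\tfrac{x}{1-v}G_r(vx,1)$ pieces cancel exactly, leaving coefficient $\tfrac{x(2-v)}{1-v}$ for $G_r(x,1)$, coefficient $0$ for $G_r(vx,1)$, and only $4x^3\delta_{r,0}$. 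Following your plan literally would therefore yield the correct but different identity
\[
(1-v+vx)G_r(x,v)=x(2-v)G_r(x,1)+4x^3(1-v)\delta_{r,0}+\widetilde{H}_r(x,v),
\]
where $\widetilde{H}_r$ sums from $j=2$, rather than the lemma as stated. To close the gap you must isolate the $j=2$ term of the fourth sum and evaluate it explicitly via $g_{n,r}(12)=2g_{n-1,r}$, which converts $\widetilde{H}_r$ into the stated $H_r$ plus exactly the missing $2vx^2$- and $\delta_{r,0}$-terms displayed above.
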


\begin{proof}
Let $r\ge0$. Extracting the coefficient of $q^r$ in~(\ref{rec1:32-1:g1i}) gives
\begin{equation}\label{eq:32-1:gnr1i}
g_{n,r}(1i)-g_{n-1,r}+\sum_{j\le i-1}g_{n-1,r}(1j)-\sum_{j\le i-1}g_{n-1,r-j+2}(1j)=0,\quad 3\le i\le n.
\end{equation}
In the last sum in this formula, the subscript $j$ has upper bound
$\min(i-1,r+2)$.

Note that
$G_{2,r}(v)=g_{2,r}=2\delta_{r,0}$.  We obtain equation~(\ref{rec:32-1:Gr})
by multiplying~(\ref{eq:32-1:gnr1i}) by ${v^{i-2}x^{n}}$ and summing over $n\ge 3$ and $3\le i\le n$.
The expressions that result from performing these operations on the four summands in \eqref{eq:32-1:gnr1i} are
\begin{align*}
&\sum_{n\ge 3}\sum_{i=3}^{n}g_{n,r}(1i)v^{i-2}x^{n}
=G_{r}(x,v)-2xG_r(x,1)-4x^{3}\delta_{r,0},\\
&\sum_{n\ge 3}\sum_{i=3}^{n}g_{n-1,r}v^{i-2}x^{n}
={vx\over 1-v}G_{r}(x,1)
-{x\over 1-v}G_{r}(vx,1)
+2vx^{3}\delta_{r,0},\\
&\sum_{n\ge 3}\sum_{i=3}^{n}\sum_{j=2}^{i-1}g_{n-1,r}(1j)v^{i-2}x^{n}
={vx\over 1-v}G_r(x,v)
-{x\over 1-v}G_r(vx,1)
+2vx^3\delta_{r,0},
\end{align*}
and
\begin{align*}
&\sum_{n\ge 3}\sum_{i=3}^{n}\sum_{j=2}^{i-1}g_{n-1,r-j+2}(1j)v^{i-2}x^{n}\\
=&x\sum_{n\ge2}\sum_{i=3}^{n+1}\sum_{j=2}^{i-1}g_{n,r-j+2}(1j)v^{i-2}x^{n}
={x\over 1-v}\sum_{n\ge2}\sum_{j=2}^ng_{n,r-j+2}(1j)x^{n}(v^{j-1}-v^{n})\\
=&2vx^3(1+2x+2vx)\delta_{r,0}
+{2vx^2\over 1-v}\bigl(G_r(x,1)-G_r(vx,1)\bigr)
+H_r(x,v).
\end{align*}
This completes the proof.
\end{proof}

The first-order difference transformation applied to \eqref{rec1:32-1:g1i} gives the recurrence
\begin{equation}\label{rec:32-1:g1k}
g_n(1k)=g_n\bigl(1(k-1)\bigr)+(q^{k-3}-1)g_{n-1}\bigl(1(k-1)\bigr),\quad 4\le k\le n.•
\end{equation}•
We will solve it with the initial value $g_n(13)=g_{n-1}$.

\begin{lemma}\label{lemeee}
Let $k\geq3$. For all $j\geq1$,
$$e_j([1],\ldots,[k-3])=\frac{1}{(1-q)^j}\sum_{a=0}^{k-3}(-1)^aq^{\binom{a+1}{2}}\left[\begin{array}{c} k-3\\a\end{array}\right]_q\binom{k-3-a}{k-3-j}.$$
\end{lemma}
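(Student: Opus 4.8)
The plan is to compute the generating function of the elementary symmetric polynomials $e_j$ evaluated at the specific alphabet $\{[1],[2],\ldots,[k-3]\}$, and then extract the coefficient of the relevant power in order to isolate $e_j$. Recall that for any alphabet $X=\{y_1,\ldots,y_m\}$ one has the product formula $\sum_{j\ge0}e_j(X)\,t^j=\prod_{i=1}^{m}(1+y_i t)$. Here $m=k-3$ and $y_i=[i]=\frac{1-q^i}{1-q}$, so I would first write
\begin{equation*}
\sum_{j\ge0}e_j\bigl([1],\ldots,[k-3]\bigr)\,t^j
=\prod_{i=1}^{k-3}\Bigl(1+\tfrac{1-q^i}{1-q}\,t\Bigr)
=\prod_{i=1}^{k-3}\frac{(1-q)+t-q^i t}{1-q}.
\end{equation*}
The factor $(1-q)^{-(k-3)}$ pulls out in front, and the remaining product $\prod_{i=1}^{k-3}\bigl((1-q+t)-q^i t\bigr)$ is a polynomial in $q^i$ of a shape that matches a $q$-binomial expansion.

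The key step is to recognize the product $\prod_{i=1}^{k-3}(A-q^i B)$ as an instance of the finite $q$-binomial theorem (the $q$-analogue of Gauss). In the standard form one has
\begin{equation*}
\prod_{i=0}^{N-1}(A+q^{i}B)=\sum_{a=0}^{N}q^{\binom{a}{2}}{N\brack a}A^{N-a}B^{a},
\end{equation*}
so after reindexing the product (shifting $i\mapsto i$ runs from $1$ to $k-3$, which accounts for the exponent $\binom{a+1}{2}$ rather than $\binom{a}{2}$) and setting $A=1-q+t$ and $B=-t$, I would obtain an expression of the form $\sum_{a=0}^{k-3}(-1)^a q^{\binom{a+1}{2}}{k-3\brack a}(1-q+t)^{k-3-a}t^{a}$. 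Dividing by $(1-q)^{k-3}$ and rewriting $(1-q+t)^{k-3-a}=(1-q)^{k-3-a}(1+\tfrac{t}{1-q})^{k-3-a}$ then gives the full generating function as a double sum in $a$ and in the ordinary binomial expansion of $(1+\tfrac{t}{1-q})^{k-3-a}$.

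Finally I would extract the coefficient of $t^{\,j}$. The power $t^j$ is collected from the $t^a$ already present in the $a$-th summand together with a $t^{\,j-a}$ drawn from $(1+\tfrac{t}{1-q})^{k-3-a}$; the ordinary binomial coefficient picking out $t^{\,j-a}$ from that factor is $\binom{k-3-a}{j-a}=\binom{k-3-a}{k-3-j}$, and it carries a power $(1-q)^{-(j-a)}$ which, combined with the overall $(1-q)^{-(k-3)}$ and the $(1-q)^{k-3-a}$ from $A^{k-3-a}$, collapses to exactly $(1-q)^{-j}$ independent of $a$. This produces precisely the claimed sum $\frac{1}{(1-q)^j}\sum_{a=0}^{k-3}(-1)^a q^{\binom{a+1}{2}}{k-3\brack a}\binom{k-3-a}{k-3-j}$. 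I expect the main obstacle to be purely bookkeeping: getting the $q$-binomial theorem in the precisely correct index convention so that the quadratic exponent comes out as $\binom{a+1}{2}$, and verifying that the various powers of $(1-q)$ cancel to leave the clean prefactor $(1-q)^{-j}$. No deep idea is needed beyond the $q$-binomial theorem; the care is entirely in the exponent and index alignment.
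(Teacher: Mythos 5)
Your proposal is correct and follows essentially the same route as the paper's proof: both write the generating function $\sum_j e_j([1],\ldots,[k-3])\,z^j=\prod_{a=1}^{k-3}(1+[a]z)$, apply the finite $q$-binomial theorem (with the index shift from the product starting at $i=1$ producing the exponent $\binom{a+1}{2}$), expand $(1-q+z)^{k-3-a}$ binomially, and extract the coefficient of $z^j$, with the powers of $(1-q)$ collapsing to $(1-q)^{-j}$ exactly as you describe. The only cosmetic difference is that the paper packages the intermediate product as the $q$-Pochhammer symbol $\bigl(\tfrac{qz}{1-q+z};q\bigr)_{k-3}$ before expanding, whereas you apply the homogeneous form $\prod_{i}(A+q^iB)=\sum_a q^{\binom{a}{2}}{k-3\brack a}A^{k-3-a}B^a$ directly.
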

\begin{proof}
Let $F(x_1,\ldots,x_k)=\sum_{j=0}^ke_j(x_1,\ldots,x_k)z^j$.
By the definition of elementary symmetric functions,
we deduce
\begin{align*}
F([1],\ldots,[k-3])
&=\prod_{a=1}^{k-3}(1+[a]z)
=\biggl(1+\frac{z}{1-q}\biggr)^{k-3}\biggl(\frac{qz}{1-q+z};q\biggr)_{k-3}\\
&=\sum_{a=0}^{k-3}q^{\binom{a}{2}}\left[\begin{array}{c} k-3\\a\end{array}\right]_q\frac{q^a(-z)^a(1-q+z)^{k-3-a}}{(1-q)^{k-3}}\\
&=\sum_{a=0}^{k-3}\sum_{b=0}^{k-3-a}(-1)^aq^{\binom{a+1}{2}}\left[\begin{array}{c} k-3\\a\end{array}\right]_q\binom{k-3-a}{b}\frac{z^{a+b}}{(1-q)^{a+b}}.
\end{align*}
The desired formula now follows from comparing coefficients of $z^j$ on both sides of the above identity.
\end{proof}

\begin{theorem}
For any $n\ge2$, we have
\begin{align}\label{rec:32-1:gn}
g_n
&=ng_{n-1}+
\sum_{j=2}^{n-2}\Biggl(
\sum_{a=1}^{j}(-1)^{j-a}q^{{a\choose 2}}\sum_{k=0}^{n-2-j}{j-a+k\choose k}{j-1+k\brack a-1}
\Biggr)g_{n-j}.
\end{align}•
\end{theorem}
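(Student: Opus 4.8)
The plan is to first solve the $k$-recurrence \eqref{rec:32-1:g1k} in closed form and then sum over $k$ and invoke Lemma \ref{lemeee}. I would begin by claiming that for $3\le k\le n$,
\[
g_n(1k)=\sum_{i=0}^{k-3}(q-1)^i\,e_i([1],[2],\ldots,[k-3])\,g_{n-1-i},
\]
and prove this by induction on $k$, treating each $g_{n-1-i}=g_{n-1-i}(1)$ as an atom. The base case $k=3$ is the initial value $g_n(13)=g_{n-1}$, since the empty set gives $e_0=1$. For the inductive step I would write $q^{k-3}-1=(q-1)[k-3]$, substitute the inductive hypotheses for $g_n(1(k-1))$ and $g_{n-1}(1(k-1))$ into \eqref{rec:32-1:g1k}, shift the index in the term carrying $[k-3]$ by one, and collect the coefficient of $(q-1)^i g_{n-1-i}$. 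It becomes $e_i([1],\ldots,[k-4])+[k-3]\,e_{i-1}([1],\ldots,[k-4])$, which equals $e_i([1],\ldots,[k-3])$ by the defining recurrence $e_i(X\cup\{y\})=e_i(X)+y\,e_{i-1}(X)$ for elementary symmetric functions.

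Next I would sum the closed form over $3\le k\le n$ together with the boundary term $g_n(12)=2g_{n-1}$, using $g_n=\sum_{k\ge2}g_n(1k)$. The $i=0$ summand contributes $e_0=1$ for each of the $n-2$ values $3\le k\le n$, giving $(n-2)g_{n-1}$, which together with $g_n(12)=2g_{n-1}$ yields the leading term $ng_{n-1}$. Every summand with $i\ge1$ feeds an atom $g_{n-1-i}$ with $n-1-i\le n-2$, so writing this atom as $g_{n-j}$ with $j=i+1$ and interchanging the two summations, the coefficient of $g_{n-j}$ becomes $(q-1)^{j-1}\sum_{m=j-1}^{n-3}e_{j-1}([1],\ldots,[m])$, where $m=k-3$; the nonvanishing range $j-1\le m\le n-3$ forces exactly $2\le j\le n-2$.

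Finally I would substitute Lemma \ref{lemeee} (with its $j$ replaced by $j-1$ and its $k-3$ by $m$) for each $e_{j-1}([1],\ldots,[m])$. The prefactor $(q-1)^{j-1}$ cancels the $(1-q)^{-(j-1)}$ from the lemma up to the sign $(-1)^{j-1}$, after which I would interchange the sums over $m$ and $a$, rewrite $\binom{m-a}{m-j+1}=\binom{m-a}{j-1-a}$, reindex $a\mapsto a-1$ so that $q^{\binom{a+1}{2}}$ becomes $q^{\binom{a}{2}}$ and the sign collapses to $(-1)^{j-a}$, and then set $k=m-(j-1)$ (the inner summation variable of \eqref{rec:32-1:gn}, the position index $k$ having disappeared once it was replaced by $m$) so that $\binom{m-a+1}{j-a}=\binom{j-a+k}{j-a}=\binom{j-a+k}{k}$ and ${m\brack a-1}={j-1+k\brack a-1}$. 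This reproduces the inner double sum $\sum_{a=1}^{j}(-1)^{j-a}q^{\binom a2}\sum_{k=0}^{n-2-j}\binom{j-a+k}{k}{j-1+k\brack a-1}$ of \eqref{rec:32-1:gn}. The main obstacle is the bookkeeping in this last step, namely keeping the several index shifts, the sign collapse, and the binomial symmetry aligned so that the summation limits land precisely on $1\le a\le j$ and $0\le k\le n-2-j$; the symmetric-function algebra of the first two paragraphs is routine by comparison.
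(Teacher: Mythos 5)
Your proposal is correct and follows essentially the same route as the paper: it establishes the closed form $g_n(1k)=\sum_{j=1}^{k-2}e_{j-1}([1],\ldots,[k-3])(q-1)^{j-1}g_{n-j}$ (which the paper leaves as ``routine to verify''), sums over $k$ to get $g_n=ng_{n-1}+\sum_{j}b_{n,j}(q-1)^{j-1}g_{n-j}$ with $b_{n,j}=\sum_{k=j+2}^{n}e_{j-1}([1],\ldots,[k-3])$, and then applies Lemma \ref{lemeee} with the same reindexing. Your explicit induction and index bookkeeping fill in details the paper omits, but the decomposition, the key lemma, and the final manipulation are identical.
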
•

\begin{proof}
For any $k\ge3$ and any integer $j$, define
$a_{k,j}=e_{j-1}([1],\,[2],\,\ldots,[k-3])$.
By~(\ref{rec:32-1:g1k}), it is routine to verify that
\[
g_n(1k)=\sum_{j=1}^{k-2}a_{k,j}(q-1)^{j-1}g_{n-j},\quad 3\le k\le n.
\]
Therefore,
\[
g_n=\sum_{k\ge2}g_n(1k)=ng_{n-1}+\sum_{j=2}^{n-2}b_{n,j}(q-1)^{j-1}g_{n-j},•
\]
where $b_{n,j}=\sum_{k=j+2}^ne_{j-1}([1],\,[2],\,\ldots,[k-3])$.
By Lemma \ref{lemeee}, we deduce
$$b_{n,j}=\frac{1}{(1-q)^j}\sum_{k=j+2}^n\sum_{a=0}^{k-3}(-1)^aq^{\binom{a+1}{2}}\left[\begin{array}{c} k-3\\a\end{array}\right]_q\binom{k-3-a}{k-2-j},$$
which gives \eqref{rec:32-1:gn}.
\end{proof}

\begin{corollary}\label{cor:32-1:average}
For any $n\ge2$, the number of permutations~$\pi$ of length~$n$ with $\text{Flatten}(\pi)$ avoiding
$32\dash1$ is
$\sum_{k=1}^{n-1}2^kS(n-1,k)$,
and
the average number of
occurrences of $32\dash1$ in $\text{Flatten}(\pi)$ over $\pi\in\mathcal{S}_n$ is given by
${n^2-9n-4\over 12}+H_n$.
\end{corollary}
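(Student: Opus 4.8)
The plan is to read off both quantities from the distribution polynomial $g_n=g_n^{32\dash1}(1)$. Since $\text{Flatten}$ is a bijection on $\mathcal{S}_n$ and every flattened permutation begins with $1$, we have $g_n=\sum_{\pi\in\mathcal{S}_n}q^{\s(\text{Flatten}(\pi))}$, so $g_n\big|_{q=1}=n!$. Hence the number of avoiders is $f_n:=g_n\big|_{q=0}$, while the average number of occurrences is $\mathrm{avr}(n)=g_n'(1)/n!$, where $'$ denotes $d/dq$. I would extract both directly from the recurrence \eqref{rec:32-1:gn}, in the form $g_n=ng_{n-1}+\sum_{j=2}^{n-2}b_{n,j}(q-1)^{j-1}g_{n-j}$ established in its proof, with $b_{n,j}=\sum_{k=j+2}^{n}e_{j-1}([1],\ldots,[k-3])$.

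For the avoidance count I would set $q=0$. Using $[m]\big|_{q=0}=1$ gives $e_{j-1}([1],\ldots,[k-3])\big|_{q=0}=\binom{k-3}{j-1}$, so a hockey-stick summation yields $b_{n,j}\big|_{q=0}=\sum_{k=j+2}^{n}\binom{k-3}{j-1}=\binom{n-2}{j}$; together with $(q-1)^{j-1}\big|_{q=0}=(-1)^{j-1}$ this produces
\[
f_n=nf_{n-1}+\sum_{j=2}^{n-2}\binom{n-2}{j}(-1)^{j-1}f_{n-j}.
\]
Restoring the $j=0$ and $j=1$ terms collapses this to the clean binomial relation $\sum_{j=0}^{n-2}\binom{n-2}{j}(-1)^{j}f_{n-j}=2f_{n-1}$. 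I would then confirm that $f_n=\sum_{k=1}^{n-1}2^kS(n-1,k)$ solves it. Writing $a_m=\sum_{k=1}^{m}2^kS(m,k)$, whose exponential generating function is $A(t)=e^{2(e^t-1)}$, the claim $f_n=a_{n-1}$ reduces the relation to $\sum_{s=0}^{m}\binom{m}{s}(-1)^{m-s}a_{s+1}=2a_m$ with $m=n-2$. Its left side is $m!\,[t^m]\,e^{-t}A'(t)$, and since $A'(t)=2e^tA(t)$ we get $e^{-t}A'(t)=2A(t)$, which equals exactly $2a_m$; checking $f_1=1,f_2=2,f_3=6$ finishes this half.

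For the average I would differentiate \eqref{rec:32-1:gn} in $q$ and set $q=1$. The decisive observation is that in each summand $b_{n,j}(q-1)^{j-1}g_{n-j}$ the factor $(q-1)^{j-1}$ kills every contribution at $q=1$ except the one arising from differentiating $(q-1)^{j-1}$ when $j=2$ (where $\tfrac{d}{dq}(q-1)^{1}=1$). Writing $D_n=g_n'(1)$ and using $g_{n-2}\big|_{q=1}=(n-2)!$, this leaves
\[
D_n=nD_{n-1}+(n-2)!\,b_{n,2}\big|_{q=1}.
\]
Since $e_1([1],\ldots,[k-3])\big|_{q=1}=\sum_{a=1}^{k-3}a=\binom{k-2}{2}$, a second hockey-stick gives $b_{n,2}\big|_{q=1}=\sum_{k=4}^{n}\binom{k-2}{2}=\binom{n-1}{3}$. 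Dividing by $n!$ turns this into the telescoping recurrence $\mathrm{avr}(n)-\mathrm{avr}(n-1)=\binom{n-1}{3}/\bigl(n(n-1)\bigr)=(n-2)(n-3)/(6n)$. I would finish by checking that $\phi(n)=\tfrac{n^2-9n-4}{12}+H_n$ has the same first difference, namely $\phi(n)-\phi(n-1)=\tfrac{n-5}{6}+\tfrac1n=\tfrac{(n-2)(n-3)}{6n}$, together with the base value $\phi(1)=0=\mathrm{avr}(1)$ (and indeed $\phi(3)=0$, consistent with the fact that no $32\dash1$ pattern can occur when $n\le3$), so the two sequences coincide for all $n$.

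The routine parts are the two hockey-stick simplifications and the telescoping in the average. I expect the main obstacle to be the first half: recognizing the closed form $\sum_k 2^kS(n-1,k)$ from the alternating binomial recurrence for $f_n$. The generating-function identity $A'(t)=2e^tA(t)$ for $A(t)=e^{2(e^t-1)}$ is what renders this transparent; alternatively, since $\sum_k2^kS(n-1,k)$ counts set partitions of $[n-1]$ with each block assigned one of two colors, one could instead look for a bijection between flattened $32\dash1$-avoiders of length $n$ and such two-colored partitions, yielding a combinatorial proof of the avoidance count.
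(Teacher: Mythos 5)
Your proposal is correct and takes essentially the same route as the paper's algebraic proof: the paper likewise sets $q=0$ in recurrence \eqref{rec:32-1:gn} to obtain $g_{n,0}=ng_{n-1,0}+\sum_{j=2}^{n-2}(-1)^{j-1}\binom{n-2}{j}g_{n-j,0}$ and identifies the solution as $\sum_{k=1}^{n-1}2^kS(n-1,k)$, and it obtains the average exactly as you do, by differentiating the recurrence and setting $q=1$ so that only the $j=2$ term survives (a computation it merely points to as being ``similar'' to the other patterns). Your EGF verification via $A'(t)=2e^tA(t)$ and the explicit telescoping of $\mathrm{avr}(n)-\mathrm{avr}(n-1)=(n-2)(n-3)/(6n)$ simply supply the details the paper leaves as ``one can prove by induction'' and ``in a similar manner.''
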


\begin{proof} Let $n\ge2$.
Setting $q=0$ in~(\ref{rec:32-1:gn}), we get
\[
g_{n,0}=ng_{n-1,0}+\sum_{j=2}^{n-2}(-1)^{j-1}{n-2\choose j}g_{n-j,0}.
\]
Note that  $g_{1,0}=1$.
One can prove by induction that
\[
g_{n,0}={1\over e^2}\sum_{k\ge1}{2^kk^{n-1}\over k!}=\sum_{k=1}^{n-1}2^kS(n-1,k).
\]
The average number can be found in a similar manner as in the case $13\dash2$.
\end{proof}

For more information of the sequence $g_n(0)$, see the sequence~$A001861$ in OEIS~\cite{OEIS}.

\subsection{Counting $12\dash3$-patterns}

Let $3\le i\le n$. We have
\begin{align}
g_n(1i)
&=\sum_{j<i}g_n(1ij)+\sum_{j>i}g_n(1ij)
=\sum_{j<i}q^{j-i+1}g_{n-1}(1j)+q^{n-i}\sum_{j\ge i}g_{n-1}(1j)\notag\\
&=q^{n-i}g_{n-1}-\sum_{j\le i-1}(q^{n-i}-q^{j-i+1})g_{n-1}(1j).\label{rec1:12-3:g1i}
\end{align}
The first-order difference transformation of the above formula gives the recurrence
\begin{equation}\label{rec:12-3:g1k}
g_n(1k)
=q^{-1}g_n\bigl(1(k-1)\bigr)+(1-q^{n-k})g_{n-1}\bigl(1(k-1)\bigr),
\quad 4\le k\le n.
\end{equation}•
We will solve it with the initial value
\begin{equation}\label{ini:12-3:g1k}
g_n(13)=q^{n-3}g_{n-1}-2q^{n-3}(q^{n-3}-1)g_{n-2},\quad n\ge3.
\end{equation}

\begin{lemma}\label{lem:h}
Let $n\geq0$, $0\le j\le k-1$ and $X=\{[n+i]\colon0\le i\le k-j-1\}$. Then
$$h_{j-1}(X)
=\sum_{i=0}^{j-1}(-1)^i
{k-j-1+i\brack i}
\binom{k-2}{j-1-i}\frac{q^{in}}{(1-q)^{j-1}}.$$
\end{lemma}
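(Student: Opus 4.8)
The plan is to evaluate $h_{j-1}(X)$ through its generating function and to reduce the computation to two classical expansions: a $q$-binomial series that resolves the product of factors running in geometric progression, and an ordinary binomial series for the leftover power of $1-q-z$.

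First I would invoke the standard identity $\sum_{d\ge0}h_d(X)z^d=\prod_{x\in X}(1-xz)^{-1}$. Setting $m=k-j$ (the number of elements of $X$) and substituting $[n+\ell]=(1-q^{n+\ell})/(1-q)$, each factor rewrites as
\[
1-[n+\ell]z=\frac{(1-q-z)+q^nz\,q^\ell}{1-q},\qquad 0\le\ell\le m-1,
\]
so that, abbreviating $A=1-q-z$ and $B=q^nz$, the generating function equals $(1-q)^m\prod_{\ell=0}^{m-1}(A+Bq^\ell)^{-1}$. I would then apply the $q$-binomial series $\prod_{\ell=0}^{m-1}(1-tq^\ell)^{-1}=\sum_{s\ge0}{m-1+s\brack s}t^s$ with $t=-B/A$, which turns the product into $A^{-m}\sum_{s\ge0}(-1)^s{m-1+s\brack s}B^sA^{-s}$, and expand the remaining power $A^{-m-s}=(1-q-z)^{-m-s}$ by the ordinary binomial theorem in $z/(1-q)$. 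Collecting all powers of $1-q$ and of $z$ gives
\[
\sum_{d\ge0}h_d(X)z^d=\sum_{s,t\ge0}(-1)^s{m-1+s\brack s}\binom{m+s+t-1}{t}\frac{q^{ns}z^{s+t}}{(1-q)^{s+t}}.
\]
Extracting the coefficient of $z^{j-1}$ forces $s+t=j-1$; writing $i=s$ and simplifying $\binom{m+s+t-1}{t}=\binom{m+j-2}{j-1-i}=\binom{k-2}{j-1-i}$ while $m-1+s=k-j-1+i$ recovers exactly the asserted formula, with $(1-q)^{-(s+t)}=(1-q)^{-(j-1)}$ and $q^{ns}=q^{in}$.

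The bookkeeping with indices is routine once the two expansions are in place, so the only delicate point is matching them correctly: the $q$-deformed series is legitimate precisely because the denominators $1-tq^\ell$ form a geometric progression in $q$, whereas the power of $1-q-z$ must be expanded ordinarily, and the sign arising from $t=-B/A$ has to be carried through. I would state the $q$-binomial series as the known identity $\sum_{s\ge0}{m-1+s\brack s}t^s=\prod_{\ell=0}^{m-1}(1-tq^\ell)^{-1}$ (checkable by partial fractions), after which comparing coefficients of $z^{j-1}$ completes the proof. The degenerate case $j=0$ is immediate, both sides vanishing by the stated conventions.
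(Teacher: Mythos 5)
Your proposal is correct and follows essentially the same route as the paper's proof: the same generating function identity $\sum_{d\ge0}h_d(X)z^d=\prod_{x\in X}(1-xz)^{-1}$, the same rewriting of each factor via $1-[n+\ell]z=\frac{(1-q-z)+q^nzq^\ell}{1-q}$, the same negative $q$-binomial series followed by an ordinary binomial expansion of the power of $1-q-z$, and the same extraction of the coefficient of $z^{j-1}$. Your only (cosmetic) improvement is using a fresh summation variable $d$ and the abbreviations $A$, $B$, $m=k-j$, which avoids the paper's notational clash of reusing $j$ as both the fixed parameter and the series index.
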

\begin{proof}
By the definition of complete symmetric functions,
we have
$\sum_{j\ge0}h_{j}(X)z^{j}=\prod_{x\in X}(1-xz)^{-1}$ for any set~$X$.
Taking $X=\{[n+i]\colon0\le i\le k-j-1\}$, we deduce
\begin{align*}
\sum_{j\ge1}h_{j-1}(X)z^{j-1}
&=\biggl(1-\frac{z}{1-q}\biggr)^{j-k}
\prod_{i=0}^{k-j-1}\biggl(1+\frac{zq^{n}}{1-q-z}q^i\biggr)^{-1}\\
&=\sum_{a\geq0}(-1)^a
{k-j-1+a\brack a}\frac{q^{na}z^a}{(1-q)^a}\biggl(1-\frac{z}{1-q}\biggr)^{j-k-a}\\
&=\sum_{a,b\geq0}(-1)^a
{k-j-1+a\brack a}\binom{k-j+a+b-1}{b}\frac{q^{na}z^{a+b}}{(1-q)^{a+b}}.
\end{align*}
The desired formula now follows from comparing coefficients of $z^{j-1}$ on both sides of the above identity.
\end{proof}

\begin{theorem}
For any $n\ge3$, we have
\begin{equation}\label{rec:12-3:gn}
g_n=\sum_{j=2}^{n-1}c_{n,j}g_{n-j},•
\end{equation}•
where
\[
c_{n,j}=\sum_{i=0}^{j-1}\sum_{k=0}^{n-1-j}(-1)^i\left(
2{k+i\brack i}{k+j-1\choose j-i-1}
-{k+i-1\brack i}{k+j-2\choose j-i-1}\right)q^{(i+1)(n-j-k-1)}.
\]
\end{theorem}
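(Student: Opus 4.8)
The plan is to mirror exactly the structure used in the $32\dash1$ case, since the recurrence $(\ref{rec:12-3:g1k})$ for $g_n(1k)$ together with the initial value $(\ref{ini:12-3:g1k})$ has the same first-order linear shape. The key difference is that the coefficient is now $q^{-1}$ on the $g_n(1(k-1))$ term and $(1-q^{n-k})$ on the $g_{n-1}(1(k-1))$ term, so I expect complete symmetric functions $h_j$ (evaluated at shifted $q$-brackets $[n+i]$) to play the role that the elementary functions $e_j$ played before. First I would posit an ansatz of the form
\[
g_n(1k)=\sum_{j} a_{k,j}\, g_{n-j},
\]
and show, by substituting into $(\ref{rec:12-3:g1k})$ and matching, that the coefficients $a_{k,j}$ satisfy a closed recurrence whose solution is given by a complete homogeneous symmetric function in the relevant shifted bracket set. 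This is precisely the object controlled by Lemma~\ref{lem:h}, which is why that lemma was proved first.

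The main technical step will be pinning down the correct argument set for $h$. In $(\ref{rec:12-3:g1k})$ the weight $(1-q^{n-k})$ depends on both $n$ and $k$, unlike the $32\dash1$ case where the analogous weight $(q^{k-3}-1)$ depended only on $k$; this coupling is what forces the bracket arguments $[n+i]$ in Lemma~\ref{lem:h} to carry an $n$-dependence through the $q^{in}$ factors, and it is the reason the final formula for $c_{n,j}$ contains the exponential weight $q^{(i+1)(n-j-k-1)}$ rather than a pure power of $q$. I would carry out the substitution carefully to confirm that the shifted set $X=\{[n+i]\colon 0\le i\le k-j-1\}$ is exactly the one appearing in the ansatz, and verify the two-term initial data $(\ref{ini:12-3:g1k})$ separately, since it spans two values of $j$ and fixes the base of the induction.

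Once the expression $g_n(1k)=\sum_j a_{k,j}\,g_{n-j}$ is established, the derivation of $(\ref{rec:12-3:gn})$ is the routine summation $g_n=\sum_{k\ge2}g_n(1k)$, which turns the $h$-coefficients into $c_{n,j}$ via a sum over $k$. Applying Lemma~\ref{lem:h} termwise and interchanging the order of summation should produce the double sum over $i$ and $k$ displayed in the statement; the somewhat asymmetric shape of $c_{n,j}$, namely the difference of two products of a $q$-binomial $\genfrac[]{0pt}{}{\cdot}{\cdot}$ with an ordinary binomial $\binom{\cdot}{\cdot}$, I expect to arise from the two distinct contributions in the initial value $(\ref{ini:12-3:g1k})$ (the $g_{n-1}$ piece and the $g_{n-2}$ piece), which propagate through the recurrence with slightly different bracket lengths. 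The hard part will be bookkeeping the index ranges so that the boundary terms line up and the single combinatorial identity from Lemma~\ref{lem:h} covers both contributions simultaneously; the algebra itself is mechanical once the argument set is correctly identified.
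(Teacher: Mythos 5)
Your proposal follows essentially the same route as the paper's proof: an ansatz expanding $g_n(1k)$ as a linear combination of the $g_{n-j}$ whose ($n$-dependent) coefficients are complete homogeneous symmetric functions in shifted $q$-brackets, with the two-term difference structure traced to the two pieces of the initial value \eqref{ini:12-3:g1k}, followed by summing over $k$ and invoking Lemma~\ref{lem:h} to obtain $c_{n,j}$. This matches the paper's argument step for step, including your observation that the coupling of $n$ and $k$ in the weight $(1-q^{n-k})$ is what produces the factors $q^{(i+1)(n-j-k-1)}$ in the final formula.
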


\begin{proof}
By~(\ref{rec:12-3:g1k}) and~(\ref{ini:12-3:g1k}),
it is routine to verify that
\[
{g_n(1k)}=q^{n-k}\sum_{j=1}^{k-1}(1-q)^{j-1}a_{n,k,j}g_{n-j}, \qquad 3\le k\le n,
\]
where $a_{n,k,j}
=2h_{j-1}\bigl(\bigl\{[n-i]\colon j+1\le i\le k\bigr\}\bigr)
-h_{j-1}\bigl(\bigl\{[n-i]\colon j+2\le i\le k\bigr\}\bigr)$.
Therefore,
\begin{equation}\label{rec:12-3:gna}
g_n=\sum_{k\ge2}g_n(1k)
=\bigl(2q^{n-2}+[n-2]\bigr)g_{n-1}+\sum_{j=2}^{n-1}b_{n,j}(1-q)^{j-1}g_{n-j},•
\end{equation}•
where
\begin{equation}\label{def:12-3:a}
b_{n,j}
=\sum_{k=j+1}^n
\biggl(2h_{j-1}\bigl(\bigl\{[n-i]\colon j+1\le i\le k\bigr\}\bigr)
-h_{j-1}\bigl(\bigl\{[n-i]\colon j+2\le i\le k\bigr\}\bigr)\biggr)q^{n-k}.
\end{equation}
Recurrence \eqref{rec:12-3:gn} now follows from Lemma \ref{lem:h}.
\end{proof}

\begin{corollary}\label{cor:12-3:average}
For any $n\ge2$, the number of permutations~$\pi$ of length~$n$ with $\text{Flatten}(\pi)$ avoiding $12\dash3$ is $-2\sum_{i=0}^{n-2}{n-2\choose i}(B_i+B_{i+1})\tilde{B}_{n-i-3}$, and
the average number of
occurrences of $12\dash3$ in $\text{Flatten}(\pi)$ over $\pi\in\mathcal{S}_n$ is given by ${n^3+3n^2-40n+24\over 12n}+H_n$.
\end{corollary}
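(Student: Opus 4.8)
The plan is to read off both assertions from the recurrence \eqref{rec:12-3:gn} of the preceding theorem, treating the avoidance count and the average separately. Throughout I would use the fact that every standard cycle form opens with the cycle containing $1$, so $\text{Flatten}(\pi)$ always begins with $1$ and hence $g_n=g_n^{12\dash3}(1)=\sum_{\pi\in\mathcal{S}_n}q^{\s(\text{Flatten}(\pi))}$ ranges over all of $\mathcal{S}_n$. In particular $g_n\big|_{q=1}=n!$, the number of $\pi$ avoiding $12\dash3$ is the constant term $g_{n,0}=g_n\big|_{q=0}$, and the average number of occurrences equals $g_n'(1)/n!$.

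For the avoidance count I would specialize \eqref{rec:12-3:gn} at $q=0$. Since $[m]\big|_{q=0}=1$, every Gaussian binomial in $c_{n,j}$ collapses to $1$, while the factor $q^{(i+1)(n-j-k-1)}$ survives only when $(i+1)(n-j-k-1)=0$, that is, only for $k=n-j-1$. The inner sums over $i$ then reduce, via the alternating identity $\sum_{m=0}^{M}(-1)^m\binom{N}{m}=(-1)^M\binom{N-1}{M}$, to $c_{n,j}\big|_{q=0}=2\binom{n-3}{j-1}-\binom{n-4}{j-1}$, which yields the clean linear recurrence
\[
g_{n,0}=\sum_{j=2}^{n-1}\Bigl(2\binom{n-3}{j-1}-\binom{n-4}{j-1}\Bigr)g_{n-j,0}.
\]
I would then pass to exponential generating functions and identify the closed form by exploiting the reciprocity $\bigl(\sum_n B_n x^n/n!\bigr)\bigl(\sum_n\tilde{B}_n x^n/n!\bigr)=e^{e^x-1}e^{1-e^x}=1$ between the Bell and complementary Bell EGFs; the binomial convolution $\sum_i\binom{n-2}{i}(B_i+B_{i+1})\tilde{B}_{n-i-3}$ is precisely a coefficient of such a product. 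This EGF identification is the main obstacle, since the recurrence above is neither of bounded order nor of a form from which the Bell/complementary-Bell combination is self-evident.

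For the average it is more convenient to differentiate the equivalent form \eqref{rec:12-3:gna}, because each summand there carries a factor $(1-q)^{j-1}$. At $q=1$ this factor vanishes for $j\ge2$, and $\tfrac{d}{dq}(1-q)^{j-1}\big|_{q=1}$ vanishes for $j\ge3$ and equals $-1$ for $j=2$, so after differentiating and setting $q=1$ only the $g_{n-1}$ term and the single $j=2$ summand survive. Writing $a_n=g_n'(1)/n!$ for the average and using $\tfrac{d}{dq}q^{n-2}\big|_{q=1}=n-2$ together with $[n-2]'\big|_{q=1}=\binom{n-2}{2}$, this produces a first-order recurrence of the shape $a_n=a_{n-1}+R(n)$ with $R(n)$ a rational function of $n$. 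Computing $R(n)$ requires the value $b_{n,2}\big|_{q=1}$, which I would read off from \eqref{def:12-3:a} using $[m]\big|_{q=1}=m$ and, as a point needing care, the empty-set convention $h_1(\emptyset)=1$, which affects the $k=3$ term. Summing the telescoping recurrence over $n$ converts the $1/n$ contributions in $R(n)$ into the harmonic number $H_n$, and after checking the base case the closed form ${n^3+3n^2-40n+24\over12n}+H_n$ follows. The average computation is routine once $b_{n,2}\big|_{q=1}$ is in hand; the real work lies in the generating-function identification for the avoidance numbers.
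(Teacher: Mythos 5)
Your plan for the average half coincides with the paper's own argument (differentiate \eqref{rec:12-3:gna}, set $q=1$ so that only the $j=1$ term and the single $j=2$ summand survive, divide by $n!$ and telescope), but the one substantive input, $b_{n,2}\big|_{q=1}$, is resolved backwards in your proposal. The recurrence itself forces the \emph{standard} empty-set convention $h_1(\emptyset)=0$, not the $h_1(\emptyset)=1$ you prescribe: matching the initial value \eqref{ini:12-3:g1k} requires $a_{n,3,2}=2[n-3]$, whereas the definition of $a_{n,k,j}$ gives $a_{n,3,2}=2[n-3]-h_1(\emptyset)$, so the paper's blanket convention $s_j(\emptyset)=\delta_{j,1}$ cannot be applied in this section. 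Concretely, with your convention the recurrence \eqref{rec:12-3:gna} is already false at $n=3$: it would give $g_3=(2q+[1])g_2-(1-q)g_1=5q+1$ instead of the true $g_3=4q+2$. With $h_1(\emptyset)=0$ one gets $b_{n,2}\big|_{q=1}=\frac{(n+2)(n-2)(n-3)}{3}$, which is exactly what makes the telescoped sum come out to $\frac{n^3+3n^2-40n+24}{12n}+H_n$; with your value, which is smaller by $1$, each increment $R(n)$ gains $\frac{1}{n(n-1)}$ and the computation terminates in a contradiction with the claim rather than a proof of it (e.g.\ at $n=4$ it fails to reproduce $g_4'(1)=38$, i.e.\ average $19/12$).

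The avoidance half has a more serious gap: the step you defer as ``the main obstacle'' \emph{is} the proof, and nothing in your sketch supplies it. The reciprocity $e^{e^x-1}\cdot e^{1-e^x}=1$ cannot identify the closed form, because the claimed expression is not a coefficient of that product: with $\tilde{B}_{-1}=-1$, it is the coefficient of $x^{n-2}/(n-2)!$ in $2(e^x+1)e^{e^x-1}\bigl(1-\int_0^x e^{1-e^t}\,dt\bigr)$, a convolution of $B_i+B_{i+1}$ with the \emph{shifted} complementary Bell numbers, whose EGF is an antiderivative of $e^{1-e^x}$, not its reciprocal. The paper reaches this by translating the $q=0$ recurrence into the differential equation $G''(x)=(e^xG(x))'+e^xG(x)$ for $G(x)=\sum_{n\ge2}g_n(0)\frac{x^{n-2}}{(n-2)!}$ and solving it explicitly before extracting coefficients; some argument of this kind (derive and solve the ODE, or at least verify that the proposed EGF satisfies the recurrence) is indispensable and absent from your proposal. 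In addition, the recurrence you specialize is missing its $j=1$ term: the lower limit $j=2$ in \eqref{rec:12-3:gn} is a slip in the theorem statement, as its own proof records this term separately in \eqref{rec:12-3:gna} as $(2q^{n-2}+[n-2])g_{n-1}$, which at $q=0$ contributes $g_{n-1}(0)$. Your coefficients for $j\ge2$ are correct, and indeed $2\binom{n-3}{j-1}-\binom{n-4}{j-1}=\binom{n-3}{j-1}+\binom{n-4}{j-2}$, but without the $j=1$ term the recurrence fails already at $n=4$, giving $4$ instead of $g_4(0)=6$.
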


\begin{proof}
Letting $q=0$ in~(\ref{rec:12-3:gn}), we get
\[
g_n(0)=\sum_{j=1}^{n-2}\Biggl({n-3\choose j-1}+{n-4\choose j-2}\Biggr)g_{n-j}(0),
\quad n\ge4,
\]
with $g_1(0)=1$ and $g_2(0)=g_3(0)=2$. Define $G(x)=\sum_{n\ge2}g_n(0){x^{n-2}\over (n-2)!}$.
Then the above recurrence translates to
\[
G''(x)=(e^xG(x))'+e^xG(x).
\]
Solving this differential equation gives
\[
G(x)=2(e^x+1)e^{e^x-1}\biggl(1-\int_{0}^xe^{1-e^t} dt\biggr)-2.
\]
Note that
\begin{align*}
e^{e^x-1}&=\sum_{n\ge0}B_n{x^n\over n!},\\
e^{e^x-1+x}&=\sum_{n\ge0}B_{n+1}{x^n\over n!},\\
\int_{0}^xe^{1-e^t} dt&=\int_0^x\sum_{n\ge0}\tilde{B}_n{t^n\over n!} dt
=\sum_{n\ge1}\tilde{B}_{n-1}{x^{n}\over n!},
\end{align*}
Recall that the sequence $\{\tilde{B}_n\}$ contains both positive and negative integers. (See Rao Uppuluri and Carpenter \cite{RC69} and entry A000587 in OEIS \cite{OEIS}).
We define $\tilde{B}_{-1}=-1$. Then
extracting the coefficient of $x^{n-2}$ from the formula above for $G(x)$ gives
\[
g_n(0)=-2\sum_{i=0}^{n-2}{n-2\choose i}(B_i+B_{i+1})\tilde{B}_{n-i-3},\quad n\ge3,
\]
which completes the proof of the first statement.

For the average number of occurrences, we
differentiate both sides of (\ref{rec:12-3:gn}) and set $q=1$ to obtain
\begin{align*}
g_n'(1)
&=\Bigl(2(n-2)q^{n-3}+[n-2]'\Bigr)\Big|_{q=1}g_{n-1}(1)+ng_{n-1}'(1)-b_{n,2}\big|_{q=1}g_{n-2}(1)\\
&=\biggl(2(n-2)+{n-2\choose 2}\biggr)(n-1)!+ng_{n-1}'(1)-{(n+2)(n-2)(n-3)\over3}(n-2)!.
\end{align*}
The desired result now follows from solving this recurrence and noting that the average number of occurrences is given by $g_n'(1)/n!$.
\end{proof}

\section{Recurrence in terms of generalized symmetric functions}
\subsection{Counting $23\dash1$-patterns}

Let $3\le i\le n$. We have
\begin{align}
g_n(1i)
&=\sum_{j<i}g_n(1ij)+\sum_{j>i}g_n(1ij)
=\sum_{j<i}g_{n-1}(1j)+\sum_{j\ge i}q^{i-2}g_{n-1}(1j)\notag\\
&=q^{i-2}g_{n-1}+(1-q^{i-2})\sum_{j\le i-1}g_{n-1}(1j).\label{rec1:23-1:g1i}
\end{align}
The first-order difference transformation of the above formula gives the recurrence
\begin{equation}\label{rec:23-1:g1k}
g_n(1k)
=-{q^{k-3}g_{n-1}\over[k-3]}
+{[k-2]\over[k-3]}g_n\bigl(1(k-1)\bigr)
+(1-q)[k-2]g_{n-1}\bigl(1(k-1)\bigr),\quad 4\le k\le n.
\end{equation}
We will solve it with the initial value
$g_n(13)=q\cdotp g_{n-1}+2(1-q)g_{n-2}$.

\begin{theorem}
For all $n\geq2$, we have
\begin{equation}\label{rec:23-1:gn}
g_n=\bigl(1+[n-1]\bigr)g_{n-1}+\sum_{j=2}^{n-1}b_{n,j}(1-q)^{j-1}g_{n-j},•
\end{equation}•
where
\[
b_{n,2}
=\sum_{k=1}^{n-2}[k]\bigl(1+[k]\bigr)
={(2-q)n\over(q-1)^2}
+{q^3-3q^2+q+4\over(q-1)^3(q+1)}
+{(q-3)q^{n-1}\over(q-1)^3}
+{q^{2n-2}\over(q-1)^3(q+1)},
\]
and for $j\ge3$,
\[
b_{n,j}=\sum_{k=j+1}^n
\sum_{1\le i_1<i_2<\cdots<i_{j-2}\le k-3}\bigl(1+[i_1]\bigr)[i_1][i_2]\cdots[i_{j-2}][k-2].
\]
\end{theorem}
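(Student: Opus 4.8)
The plan is to solve the two-variable recurrence (\ref{rec:23-1:g1k}) for $g_n(1k)$ in closed form and then sum over $k$ using $g_n=\sum_{k\ge2}g_n(1k)$, just as in the $32\dash1$ and $12\dash3$ cases. Concretely, for $3\le k$ and $1\le j\le k-1$ I would set $a_{k,1}=q^{k-2}$, $a_{k,2}=[k-2]\bigl(1+[k-2]\bigr)$, and for $j\ge3$
\[
a_{k,j}=\sum_{1\le i_1<i_2<\cdots<i_{j-2}\le k-3}\bigl(1+[i_1]\bigr)[i_1][i_2]\cdots[i_{j-2}][k-2],
\]
and then claim that $g_n(1k)=\sum_{j=1}^{k-1}a_{k,j}(1-q)^{j-1}g_{n-j}$ for all $3\le k\le n$. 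Note $a_{k,j}$ is free of $n$ and vanishes for $k<j+1$, which will make the eventual interchange of summations clean.

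First I would verify this claim by induction on $k$, with $n$ universally quantified over $n\ge k$. The base case $k=3$ is exactly the prescribed initial value $g_n(13)=q\,g_{n-1}+2(1-q)g_{n-2}$, since $a_{3,1}=q$ and $a_{3,2}=[1]\bigl(1+[1]\bigr)=2$. For the inductive step, assuming the statement for $k-1$, I would substitute the expansions of $g_n\bigl(1(k-1)\bigr)$ and $g_{n-1}\bigl(1(k-1)\bigr)$ into (\ref{rec:23-1:g1k}); the second of these shifts its summation index $j\mapsto j+1$ because of the extra lowered subscript in $g_{n-1}(\cdots)$. Collecting the coefficient of $(1-q)^{j-1}g_{n-j}$ then reduces the entire claim to the two identities
\[
a_{k,j}=\frac{[k-2]}{[k-3]}\,a_{k-1,j}+[k-2]\,a_{k-1,j-1}\quad(2\le j\le k-2),
\qquad a_{k,k-1}=[k-2]\,a_{k-1,k-2},
\]
together with a separate $j=1$ computation in which the $g_{n-1}$ produced by the first term of (\ref{rec:23-1:g1k}) combines with the $\tfrac{[k-2]}{[k-3]}a_{k-1,1}$ contribution.

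The main obstacle is the clearing of the denominators $[k-3]$: the first term of (\ref{rec:23-1:g1k}) carries $-q^{k-3}/[k-3]$ and the second carries the ratio $[k-2]/[k-3]$, yet both $a_{k,j}$ must come out polynomial. The key elementary fact that forces every cancellation is $[k-2]=[k-3]+q^{k-3}$, equivalently $[k-2]-1=q[k-3]$. For $j=1$ this gives $-q^{k-3}/[k-3]+q^{k-3}[k-2]/[k-3]=q^{k-2}=a_{k,1}$; for $j=2$ it yields $[k-2]\bigl(1+[k-3]+q^{k-3}\bigr)=[k-2]\bigl(1+[k-2]\bigr)=a_{k,2}$; and for $j\ge3$ the first displayed identity is just the standard split of the defining sum for $a_{k,j}$ according to whether its largest index $i_{j-2}$ is at most $k-4$ or equals $k-3$, the $\tfrac{[k-2]}{[k-3]}a_{k-1,j}$ and $[k-2]a_{k-1,j-1}$ pieces respectively (at the boundary $j=3$ the second piece is $[k-2]a_{k-1,2}$, which carries the factor $(1+[k-3])[k-3]$). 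Throughout, the factor $\bigl(1+[i_1]\bigr)$ stays attached to the smallest index, so the decomposition respects it. The degenerate identity $a_{k,k-1}=[k-2]a_{k-1,k-2}$ is the case in which only the single tuple $(1,2,\ldots,k-3)$ survives, with both sides equal to $2[2][3]\cdots[k-2]$.

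Finally I would sum: $g_n=g_n(12)+\sum_{k=3}^ng_n(1k)=2g_{n-1}+\sum_{k=3}^n\sum_{j=1}^{k-1}a_{k,j}(1-q)^{j-1}g_{n-j}$, using $g_n(12)=2g_{n-1}$. Interchanging the order of summation, the $j=1$ contribution $\sum_{k=3}^nq^{k-2}=[n-1]-1$ combines with $2g_{n-1}$ to give the coefficient $1+[n-1]$ of $g_{n-1}$, while for each $j\ge2$ one reads off $b_{n,j}=\sum_{k=\max(3,\,j+1)}^na_{k,j}$. For $j\ge3$ this is precisely the stated double sum, and for $j=2$ the lower limit is $k=3$, so $b_{n,2}=\sum_{k=3}^n[k-2]\bigl(1+[k-2]\bigr)=\sum_{k=1}^{n-2}[k]\bigl(1+[k]\bigr)$ after reindexing. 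The stated rational closed form for $b_{n,2}$ then follows from the routine evaluation of $\sum_{k=1}^{n-2}\bigl([k]+[k]^2\bigr)$ via $[k]=(1-q^k)/(1-q)$ and geometric-series summation, which I would relegate to a direct computation.
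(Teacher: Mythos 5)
Your proposal is correct and takes essentially the same route as the paper: the paper also expands $g_n(1k)$ as a linear combination $\sum_{j=1}^{k-1}d_{n,k,j}\,g_{n-j}$ using the recurrence \eqref{rec:23-1:g1k} with initial value $g_n(13)$, arrives at exactly your coefficients ($d_{n,k,1}=q^{k-2}$, $d_{n,k,2}=(1-q)[k-2](1+[k-2])$, and $d_{n,k,j}=(1-q)^{j-1}[k-2]\sum_{1\le i_1<\cdots<i_{j-2}\le k-3}(1+[i_1])[i_1]\cdots[i_{j-2}]$ for $j\ge3$), and then sums over $k$ to assemble $b_{n,j}$. The only difference is presentational: the paper obtains the coefficients by unwinding the recurrence (iteration), whereas you posit the closed forms and verify them by induction on $k$, using the same key identity $[k-2]=[k-3]+q^{k-3}$ and the same split of the nested sum at the largest index.
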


\begin{proof}
For any $3\le k\le n$ and any integer $j$, define $d_{n,k,j}$ by
$d_{n,3,j}=q\delta_{j,1}+2(1-q)\delta_{j,2}$ and
\[
d_{n,k,j}
=-\frac{q^{k-3}}{[k-3]}\delta_{j,1}
+\frac{[k-2]}{[k-3]}d_{n,k-1,j}
+(1-q)[k-2]d_{n-1,k-1,j-1},
\quad 4\le k\le n.
\]
By~(\ref{rec:23-1:g1k}), it is easy to verify that $g_n(1k)=\sum_{j=1}^{k-1}d_{n,k,j}g_{n-j}$
for any $3\le k\le n$. On the other hand,
we can solve $d_{n,k,j}$ by iteration as follows. For any $k\ge4$, we have
\begin{align*}
d_{n,k,1}&=-[k-2]\sum_{j=3}^{k-1}{q^{k-j}\over[k-j][k-j+1]}+[k-2]d_{n,3,1}=q^{k-2},\\
d_{n,k,2}&=[k-2]d_{n,3,2}+(1-q)[k-2]\sum_{j=1}^{k-3}q^j
=(1-q)[k-2]\bigl(1+[k-2]\bigr),
\end{align*}
and for $j\ge3$,
\begin{align*}
d_{n,k,j}
&=(1-q)[k-2]\sum_{i=j}^{k-1}d_{n-1,i,j-1}\\
&=(1-q)^{j-2}[k-2]
\sum_{2<i_{j-2}<\cdots<i_2<i_1<k}[i_1-2][i_2-2]\cdots[i_{j-3}-2]d_{n-j+2,i_{j-2},2}\\
&=(1-q)^{j-1}[k-2]a_{k,j},
\end{align*}
where
\[
a_{k,j}
=\sum_{1\le i_1<i_2<\cdots<i_{j-2}\le k-3}\bigl(1+[i_1]\bigr)[i_1][i_2]\cdots[i_{j-2}],\quad j\ge3.
\]
Therefore,
for all $3\le k\le n$, we have
\[
g_n(1k)=q^{k-2}g_{n-1}+[k-2]\sum_{j=2}^{k-1}a_{k,j}(1-q)^{j-1}g_{n-j},
\]
where $a_{k,2}=1+[k-2]$.
Consequently,
we obtain the desired formula by using $g_n=\sum_{k=2}^ng_n(1k)$.
\end{proof}

\begin{corollary}\label{cor:23-1:average}
For any $n\ge2$,
the number of permutations $\pi$ of length~$n$ with $\text{Flatten}(\pi)$ avoiding $23\dash1$
is $\sum_{k=1}^{n-1}2^kS(n-1,k)$, and
the average number of
occurrences of $23\dash1$ in $\text{Flatten}(\pi)$ over $\pi\in\mathcal{S}_n$ is given by
${n^2-9n-4\over 12}+H_n$.
\end{corollary}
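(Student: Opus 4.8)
The plan is to read off both assertions from the master recurrence~(\ref{rec:23-1:gn}), specializing at $q=0$ for the enumeration of avoiders and differentiating at $q=1$ for the average, in direct parallel with the treatment of $32\dash1$ in Corollary~\ref{cor:32-1:average}. The two patterns yield the same closed forms, so the target expressions coincide; nevertheless each must be derived afresh from the present recurrence.

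For the avoiding count I would set $q=0$ in~(\ref{rec:23-1:gn}). Using $[k]\big|_{q=0}=1$ for $k\ge1$ and $(1-q)^{j-1}\big|_{q=0}=1$, the leading factor is $1+[n-1]\big|_{q=0}=2$, while every coefficient collapses: in the defining sum for $b_{n,j}$ each $[i]$ and $[k-2]$ becomes $1$ and $1+[i_1]$ becomes $2$, so the nested sum simply counts $(j-2)$-subsets and a hockey-stick summation over $k$ gives $b_{n,j}\big|_{q=0}=2\binom{n-2}{j-1}$. Absorbing the leading term as the $j=1$ case turns the recurrence into the single binomial convolution
\[
g_{n,0}=2\sum_{j=1}^{n-1}\binom{n-2}{j-1}g_{n-j,0},\qquad n\ge2,
\]
with $g_{1,0}=1$. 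Writing $a_p=g_{p+1,0}$ and $A(x)=\sum_{p\ge0}a_px^p/p!$, this is equivalent to $A'(x)=2e^xA(x)$ with $A(0)=1$, whose solution $A(x)=e^{2(e^x-1)}$ is exactly the exponential generating function $\sum_{p\ge0}\bigl(\sum_k2^kS(p,k)\bigr)x^p/p!$. Reading off coefficients gives $g_{n,0}=\sum_{k=1}^{n-1}2^kS(n-1,k)$.

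For the average I would use $g_m\big|_{q=1}=m!$ and the fact that the average equals $g_n'(1)/n!$. Differentiating~(\ref{rec:23-1:gn}) in $q$ and setting $q=1$, the crucial observation is that every summand with $j\ge2$ carries the prefactor $(1-q)^{j-1}$: at $q=1$ only the $j=2$ term survives differentiation, contributing $-b_{n,2}\big|_{q=1}\,g_{n-2}\big|_{q=1}$, while all $j\ge3$ terms and their derivatives vanish. With $[n-1]'\big|_{q=1}=\binom{n-1}{2}$ and $b_{n,2}\big|_{q=1}=\sum_{k=1}^{n-2}k(1+k)=\tfrac13 n(n-1)(n-2)$, this yields
\[
g_n'(1)=\binom{n-1}{2}(n-1)!+n\,g_{n-1}'(1)-\tfrac13 n(n-1)(n-2)\,(n-2)!.
\]
Dividing by $n!$ gives $\mathrm{avr}(n)-\mathrm{avr}(n-1)=\tfrac{(n-1)(n-2)}{2n}-\tfrac{n-2}{3}=\tfrac{(n-2)(n-3)}{6n}$, and telescoping from $\mathrm{avr}(2)=0$ produces $\tfrac{n^2-9n-4}{12}+H_n$, the harmonic term coming from the $1/n$ piece of the increment.

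The one genuinely delicate step is the differentiation bookkeeping for the $q$-sum: one must recognize that the $(1-q)^{j-1}$ prefactors annihilate every contribution except $j=2$, so that $b_{n,2}$ alone governs the average. Once that is seen, the rest reduces to the sum $\sum k(1+k)$ and a routine telescoping of harmonic and polynomial terms. The $q=0$ specialization is comparatively mechanical, the only care being the collapse of the generalized symmetric functions in $b_{n,j}$ to binomial coefficients.
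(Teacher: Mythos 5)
Your proposal is correct and takes essentially the same approach as the paper: specialize the recurrence \eqref{rec:23-1:gn} at $q=0$ to obtain $g_n(0)=2\sum_{j=1}^{n-1}\binom{n-2}{j-1}g_{n-j}(0)$ and identify its solution with $\sum_{k=1}^{n-1}2^kS(n-1,k)$ (the paper does this by induction via the Dobinski-type formula, you via the EGF $e^{2(e^x-1)}$ --- a cosmetic difference, and in fact the EGF route is exactly what the paper uses for the analogous $21\dash3$ case), then differentiate at $q=1$, where the $(1-q)^{j-1}$ factors kill everything but the $j=2$ term, and telescope. The paper's own proof of the average is only the remark that it ``may be obtained as it was for previous patterns,'' and your explicit computation, including $b_{n,2}\big|_{q=1}=\tfrac13 n(n-1)(n-2)$ and the increment $\tfrac{(n-2)(n-3)}{6n}$, is precisely that argument carried out.
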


\begin{proof}
Let $n\ge2$. Taking $q=0$ in the recurrence~(\ref{rec:23-1:gn}), we obtain
\begin{equation}\label{rec:23-1:avoidance}
g_n(0)=2\sum_{j=1}^{n-1}{n-2\choose j-1}g_{n-j}(0).
\end{equation}•
Note that $g_1(0)=1$.
One can prove by induction that
\[
g_n(0)={1\over e^2}\sum_{k\ge1}{2^kk^{n-1}\over k!}=\sum_{k=1}^{n-1}2^kS(n-1,k).
\]
The average number of occurrences may be obtained as it was for previous patterns.
\end{proof}

\subsection{Counting $21\dash3$-patterns}

Let $3\le i\le n$. We have
\begin{align}
g_n(1i)
&=\sum_{j<i}g_n(1ij)+\sum_{j>i}g_n(1ij)
=\sum_{j<i}q^{n-i}g_{n-1}(1j)+\sum_{j\ge i}g_{n-1}(1j)\notag\\
&=g_{n-1}-(1-q^{n-i})\sum_{j\le i-1}g_{n-1}(1j).\label{eq:21-3:g1i}
\end{align}
In particular, since $g_n(12)=2g_{n-1}$, we have
\begin{equation}\label{fm:21-3:g13}
g_n(13)=g_{n-1}-(1-q^{n-3})g_{n-1}(12)
=g_{n-1}+2(q-1)[n-3]g_{n-2},\quad n\ge3.
\end{equation}
So we can focus on $n\ge4$.
The first-order difference transformation of~(\ref{eq:21-3:g1i}) gives
\begin{align}\label{rec:21-3:g1k}
g_n(1k)
={q^{n-k}g_{n-1}\over[n-k+1]}
+{[n-k]g_n\bigl(1(k-1)\bigr)\over[n-k+1]}
+(q-1)[n-k]g_{n-1}\bigl(1(k-1)\bigr),\quad 4\le k\le n.
\end{align}

\begin{theorem}
For all $n\geq2$, we have
\begin{equation}\label{rec:21-3:gn}
g_n=ng_{n-1}+\sum_{j=2}^{n-1}b_{n,j}(q-1)^{j-1}g_{n-j},
•\end{equation}•
where
\[
b_{n,2}
=\sum_{k=3}^n(k-1)[n-k]
=-{n^2\over 2(q-1)}+{(q-3)n\over2(q-1)^2}+{q(2q^{n-2}-q^{n-3}+q-2)\over(q-1)^3},
\]
and for $j\ge3$,
\[
b_{n,j}=\sum_{k=j+1}^n
\sum_{n-k\le i_1\le i_2\le \cdots\le i_{j-2}\le n-j-1}(n-j-i_{j-2}+1)[i_1][i_2]\cdots[i_{j-2}][n-k].
\]
\end{theorem}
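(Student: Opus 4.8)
The plan is to adapt the strategy used for the $23\dash1$ recurrence: write each $g_n(1k)$ as a linear combination $g_n(1k)=\sum_{j=1}^{k-1}d_{n,k,j}g_{n-j}$, solve for the coefficients $d_{n,k,j}$, and then assemble $g_n=\sum_{k=2}^ng_n(1k)$. Reading off (\ref{rec:21-3:g1k}) term by term suggests defining $d_{n,k,j}$ by $d_{n,3,j}=\delta_{j,1}+2(q-1)[n-3]\delta_{j,2}$ (from (\ref{fm:21-3:g13})) together with
\[
d_{n,k,j}=\frac{q^{n-k}}{[n-k+1]}\delta_{j,1}+\frac{[n-k]}{[n-k+1]}d_{n,k-1,j}+(q-1)[n-k]d_{n-1,k-1,j-1},\quad 4\le k\le n.
\]
The decomposition $g_n(1k)=\sum_{j=1}^{k-1}d_{n,k,j}g_{n-j}$ for $3\le k\le n$ then follows by a routine induction on $k$ with base case $k=3$.

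Next I would solve the recurrence one value of $j$ at a time. For $j=1$ the crucial fact is the telescoping identity $q^{n-k}+[n-k]=[n-k+1]$, which forces $d_{n,k,1}=1$ for every $k\ge3$; combined with $g_n(12)=2g_{n-1}$ this contributes the leading term $ng_{n-1}$, since $2+(n-2)=n$. Substituting $d_{n,k,1}=1$ collapses the $j=2$ recurrence to $d_{n,k,2}=\frac{[n-k]}{[n-k+1]}d_{n,k-1,2}+(q-1)[n-k]$, which one verifies by induction has the closed form $d_{n,k,2}=(q-1)(k-1)[n-k]$, consistent with $d_{n,3,2}=2(q-1)[n-3]$. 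Summing over $k$ yields $b_{n,2}=\sum_{k=3}^n(k-1)[n-k]$, and the stated rational expression follows from routine evaluation of the elementary $q$-sums $\sum_m[m]$ and $\sum_m m[m]$.

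For $j\ge3$ we have $d_{n,3,j}=0$ and, more generally, $d_{n,k,j}=0$ whenever $k\le j$. The efficient move is to divide the recurrence by $[n-k]$ and set $e_{n,k,j}=d_{n,k,j}/[n-k]$, which produces the telescoping relation $e_{n,k,j}=e_{n,k-1,j}+(q-1)d_{n-1,k-1,j-1}$; summing from the vanishing base case gives
\[
d_{n,k,j}=(q-1)[n-k]\sum_{i=j}^{k-1}d_{n-1,i,j-1}.
\]
Iterating this identity $j-2$ times lowers the third index down to $2$, at which point the closed form $d_{m,\ell,2}=(q-1)(\ell-1)[m-\ell]$ applies, leaving $(q-1)^{j-1}[n-k]$ times a nested sum over strictly interleaving indices whose $q$-bracket factors have the shape $[n-t-i_t]$ and whose innermost factor is $i_{j-2}-1$. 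The change of variables $a_t=n-t-i_t$ then turns the interleaving bounds into the weakly increasing chain $n-k\le a_1\le\cdots\le a_{j-2}\le n-j-1$, sends each factor $[n-t-i_t]$ to $[a_t]$, and rewrites $i_{j-2}-1$ as $n-j-a_{j-2}+1$; summing over $k$ reproduces the claimed formula for $b_{n,j}$.

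I expect the main obstacle to lie in this final step: carrying out the iteration while correctly tracking that each application of the reduction lowers the first index by one and the third index by one, and then checking that the single substitution $a_t=n-t-i_t$ simultaneously reverses every summation bound into weakly increasing order and converts the residual linear factor from the $j=2$ base case into the advertised $n-j-i_{j-2}+1$. By contrast, the induction establishing the decomposition $g_n(1k)=\sum_j d_{n,k,j}g_{n-j}$ and the explicit summation of the geometric-type series defining $b_{n,2}$ are routine.
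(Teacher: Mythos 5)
Your proposal is correct and follows essentially the same route as the paper's proof: the same ansatz $g_n(1k)=\sum_{j=1}^{k-1}d_{n,k,j}g_{n-j}$, the same iterative solution giving $d_{n,k,1}=1$ and $d_{n,k,2}=(q-1)(k-1)[n-k]$, and the same $(j-2)$-fold iteration followed by the substitution $a_t=n-t-i_t$ for $j\ge3$, ending with summation over $k$. Your version even silently corrects two typos in the paper's own argument (the Kronecker factor should indeed be $\delta_{j,1}$, not $\delta_{j,0}$, and the iterated sum should start at $i=j$, not $i=j+1$, as the paper's subsequent closed form confirms), and your telescoping via $e_{n,k,j}=d_{n,k,j}/[n-k]$ is just a cleaner phrasing of the paper's iteration.
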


\begin{proof}
For any $3\le k\le n$ and any integer $j$, define $d_{n,k,j}$ by
$d_{n,3,j}=\delta_{j,1}+2(q-1)[n-3]\delta_{j,2}$ and
\[
d_{n,k,j}
=\frac{q^{n-k}}{[n-k+1]}\delta_{j,0}
+\frac{[n-k]}{[n-k+1]}d_{n,k-1,j}
+(q-1)[n-k]d_{n-1,k-1,j-1},\quad 4\le k\le n.
\]
By~(\ref{rec:21-3:g1k}), it is easy to verify that $g_n(1k)=\sum_{j=1}^{k-1}d_{n,k,j}g_{n-j}$
for any $3\le k\le n$. On the other hand,
we can solve $d_{n,k,j}$ by iteration as follows:
\begin{align*}
d_{n,k,1}&=\frac{[n-k]}{[n-3]}+\sum_{j=4}^k\frac{[n-k]}{[n-j][n+1-j]}q^{n-j}=1,\\
d_{n,k,2}&=\frac{[n-k]}{[n-3]}d_{n,3,2}+(k-3)(q-1)[n-k]=(q-1)(k-1)[n-k],
\end{align*}
and for $j\ge3$,
\begin{align*}
d_{n,k,j}&=(q-1)[n-k]\sum_{i=j+1}^{k-1}d_{n-1,i,j-1}\\
&=(q-1)^{j-1}[n-k]\sum_{3\le i_{j-2}<\cdots<i_1\le k-1}(i_{j-2}-1)
\prod_{\ell=1}^{j-2}[n-\ell-i_\ell].
\end{align*}
Combining these formulas, we obtain
$$g_n(1k)=g_{n-1}+[n-k]\sum_{j=2}^{k-1}a_{n,k,j}(q-1)^{j-1}g_{n-j},\quad 3\le k\le n,$$
where $a_{n,k,2}=k-1$ and
\begin{equation}\label{def:21-3:a}
a_{n,k,j}
=\sum_{n-k\le i_1\le i_2\le \cdots\le i_{j-2}\le n-j-1}(n-j-i_{j-2}+1)[i_1][i_2]\cdots[i_{j-2}].
\end{equation}
We derive~(\ref{rec:21-3:gn}) by using $g_n=\sum_{k=2}^ng_n(1k)$.
This completes the proof.
\end{proof}

\begin{corollary}\label{cor:21-3:average}
For any $n\ge2$,
the number of permutations~$\pi$ of length~$n$ with $\text{Flatten}(\pi)$ avoiding $21\dash3$ equals
$2\sum_{k=1}^{n-1}kS(n-1,k)$,
and
the average number of
occurrences of $21\dash3$ in $\text{Flatten}(\pi)$ over $\pi\in\mathcal{S}_n$ is given by
${n^3-3n^2+26n-12\over 12n}-H_n$.
\end{corollary}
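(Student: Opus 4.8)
The plan is to follow the template already used for the sibling patterns: specialize the master recurrence (\ref{rec:21-3:gn}) at $q=0$ to count avoiders, and differentiate it at $q=1$ to extract the average, using that the number of permutations is $g_n(1)=n!$ and the total number of occurrences is $g_n'(1)$, so that $\mathrm{avr}(n)=g_n'(1)/n!$.

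For the avoidance count I would set $q=0$ in (\ref{rec:21-3:gn}), using $[m]\big|_{q=0}=1$ for $m\ge1$ and $[0]\big|_{q=0}=0$. The factor $[n-k]$ kills every $k=n$ term and each $[i_\ell]$ collapses to $1$, so that $b_{n,2}\big|_{q=0}=\tfrac{n(n-3)}{2}$ and, for $j\ge3$, $b_{n,j}\big|_{q=0}=\sum_{1\le i_1\le\cdots\le i_{j-2}\le n-j-1}i_1\,(n-j-i_{j-2}+1)$. Together with $(q-1)^{j-1}\big|_{q=0}=(-1)^{j-1}$ and the data $g_1(0)=1$, $g_2(0)=2$, this gives a linear recurrence for $g_n(0)$. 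I would then recognize the identity $\sum_{k=1}^{n-1}kS(n-1,k)=B_n-B_{n-1}$ (immediate from $S(m+1,k)=kS(m,k)+S(m,k-1)$), so that the target reads $g_n(0)=2(B_n-B_{n-1})$, and close the argument by induction. The cleanest route is to bypass the explicit $b_{n,j}\big|_{q=0}$: passing to the first-difference form of the $q=0$ recurrence gives $g_n(1k)\big|_{q=0}=g_n\bigl(1(k-1)\bigr)\big|_{q=0}-g_{n-1}\bigl(1(k-1)\bigr)\big|_{q=0}$, and writing $h_n(d)=g_n\bigl(1(n-d)\bigr)\big|_{q=0}$ turns this into the additive Bell-triangle recurrence $h_n(d+1)=h_n(d)+h_{n-1}(d)$ with boundary values $h_n(0)=g_{n-1}(0)$ and $h_n(1)=g_{n-2}(0)$; summing $g_n(0)=\sum_{d\ge0}h_n(d)$ then makes the appearance of $B_n-B_{n-1}$ transparent. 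Equivalently, one may translate the recurrence into an exponential generating function and verify $\sum_{n\ge2}g_n(0)\tfrac{x^{n-1}}{(n-1)!}=2(e^x-1)e^{e^x-1}$, the generating function of $2\sum_k kS(n-1,k)$.

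For the average I would differentiate (\ref{rec:21-3:gn}) with respect to $q$ and evaluate at $q=1$. Since $(q-1)^{j-1}$ and its derivative both vanish at $q=1$ for every $j\ge3$, only the $j=2$ summand survives, contributing through $\tfrac{d}{dq}(q-1)\big|_{q=1}=1$; with $g_{n-2}(1)=(n-2)!$ this yields $g_n'(1)=ng_{n-1}'(1)+b_{n,2}\big|_{q=1}(n-2)!$. Evaluating $b_{n,2}\big|_{q=1}=\sum_{k=3}^n(k-1)(n-k)=\binom{n}{3}-(n-2)=\tfrac{(n-2)(n-3)(n+2)}{6}$ and dividing by $n!$ produces the telescoping recurrence $\mathrm{avr}(n)-\mathrm{avr}(n-1)=\tfrac{(n-2)(n-3)(n+2)}{6n(n-1)}$. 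A partial-fraction split $\tfrac{(m-2)(m-3)(m+2)}{6m(m-1)}=\tfrac{m-2}{6}-\tfrac{2}{m}+\tfrac{1}{m-1}$ makes the sum from the base value $\mathrm{avr}(2)=0$ telescope into a quadratic part plus the harmonic contribution $-H_n$, giving $\mathrm{avr}(n)=\tfrac{n^3-3n^2+26n-12}{12n}-H_n$. As predicted in the abstract, this coincides with the $31\dash2$ value of Corollary \ref{cor:31-2:average}, which serves as a useful check.

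The main obstacle is the avoidance count, not the average. Unlike $23\dash1$, $32\dash1$ and $12\dash3$, whose $q=0$ recurrences carry clean binomial coefficients, here $b_{n,j}\big|_{q=0}$ is a genuinely weighted sum over weakly increasing sequences and is not a single binomial, so a naive term-by-term induction on (\ref{rec:21-3:gn}) is awkward. I therefore expect the decisive step to be the reduction to the Bell-triangle recurrence $h_n(d+1)=h_n(d)+h_{n-1}(d)$ (or the equivalent generating function), where the Bell numbers $B_n-B_{n-1}$ emerge directly and the combinatorics of the weights $b_{n,j}\big|_{q=0}$ never has to be confronted head-on.
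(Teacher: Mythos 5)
Your treatment of the average is correct and coincides with the paper's own argument: differentiate (\ref{rec:21-3:gn}) at $q=1$, note that only the $j=2$ summand survives, use $b_{n,2}\big|_{q=1}=\frac{(n+2)(n-2)(n-3)}{6}$, and telescope; your partial-fraction step merely fills in what the paper compresses into ``solving this recurrence.'' For the avoidance count, however, the paper's route is not the one you hoped to bypass but rather depends on it: the paper first simplifies the $q=0$ coefficients into binomials, obtaining
\[
g_n(0)=ng_{n-1}(0)-\frac{n(n-3)}{2}g_{n-2}(0)+\sum_{j=3}^{n-1}(-1)^{j-1}\biggl(\binom{n-2}{j}+\binom{n-3}{j-1}\biggr)g_{n-j}(0),
\]
and then applies the generating-function technique to get $\sum_{n\ge0}g_{n+2}(0)x^n/n!=2e^{e^x+2x-1}$, which is exactly the derivative of your series $2(e^x-1)e^{e^x-1}$. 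So your ``equivalent EGF verification'' is essentially the paper's proof, except that it presupposes the binomial simplification of $b_{n,j}\big|_{q=0}$ that you declared awkward and wished to avoid.

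Your preferred Bell-triangle route has a concrete gap at the top boundary. The $q=0$ difference relation $g_n(1k)\big|_{q=0}=g_n\bigl(1(k-1)\bigr)\big|_{q=0}-g_{n-1}\bigl(1(k-1)\bigr)\big|_{q=0}$ holds only for $4\le k\le n-1$, i.e. $h_n(d+1)=h_n(d)+h_{n-1}(d)$ only for $1\le d\le n-4$; the last summand of $g_n(0)=\sum_{d=0}^{n-2}h_n(d)$, namely $h_n(n-2)=g_n(12)\big|_{q=0}=2g_{n-1}(0)$, is not produced by the recurrence and violates it. Concretely, at $n=5$ one has the correct values $h_4(1)=2$, $h_4(2)=12$, $h_5(1)=6$, $h_5(2)=8$, but your recurrence would give $h_5(3)=h_5(2)+h_4(2)=20$ instead of the true $h_5(3)=2g_4(0)=40$, so your sum would be $54$ rather than $g_5(0)=74=2(B_5-B_4)$ (similarly your stated boundary $h_3(1)=g_1(0)$ is false, since $h_3(1)=2g_2(0)=4$). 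The repair is to handle the column $k=2$ separately via $g_n(12)=2g_{n-1}$. Even after that repair, $B_n-B_{n-1}$ does not emerge ``transparently'': your triangle has left boundary $h_n(1)=g_{n-2}(0)$ rather than the Aitken-array boundary, so the classical Bell-triangle row-sum identity does not apply. Closing the induction requires, e.g., expanding $h_n(d)=\sum_{i}\binom{d-1}{i}g_{n-2-i}(0)$ for $1\le d\le n-3$ and then invoking the convolution identities $\sum_{k}\binom{m}{k}B_{k+1}=B_{m+2}-B_{m+1}$ and $\sum_{k}\binom{m}{k}B_{k+2}=B_{m+3}-2B_{m+2}+B_{m+1}$ — which is roughly the same amount of work as the generating-function computation the paper calls routine. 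So the average half of your proposal stands as is, but the avoidance half needs these repairs before it constitutes a proof.
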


\begin{proof}
Let $n\ge3$. Letting $q=0$ in~(\ref{rec:21-3:gn}) gives
\begin{equation}\label{rec:21-3:avoidance}
g_n(0)=ng_{n-1}(0)-{n(n-3)\over 2}g_{n-2}(0)
+\sum_{j=3}^{n-1}(-1)^{j-1}\Biggl({n-2\choose j}+{n-3\choose j-1}\Biggr)g_{n-j}(0).
\end{equation}
Note that $g_1(0)=1$ and $g_2(0)=2$.
By a routine application of the generating function technique,
we may deduce that
\[
\sum_{n\ge0}g_{n+2}(0){x^n\over n!}=2e^{e^x+2x-1},
\]
which gives the desired formula of $g_n(0)$.
Differentiating both sides of (\ref{rec:21-3:gn}), and setting $q=1$, yields
\[
g_n'(1)
=ng_{n-1}'(1)+b_{n,2}\big|_{q=1}g_{n-2}(1)
=ng_{n-1}'(1)+{(n+2)(n-2)(n-3)\over6}(n-2)!.
\]
Solving this recurrence, we obtain the requested formula for the average number $g_n'(1)/n!$.
\end{proof}

\section{Combinatorial proofs}

In this section, we provide combinatorial proofs of Corollaries \ref{cor:31-2:average}, \ref{cor:32-1:average}, and \ref{cor:23-1:average} and of the statements concerning the average number of occurrences in Corollaries \ref{cor:12-3:average} and \ref{cor:21-3:average}.  We first prove the statements concerning the avoidance of the pattern in question.\\

\noindent\textbf{Combinatorial proofs of Corollaries \ref{cor:32-1:average} and \ref{cor:23-1:average} (avoiding).}\\

We first treat the case $23\dash1$.  Recall that the Stirling number of the second kind $S(m,k)$ counts the partitions of an $m$-element set into exactly $k$ blocks.  Then the sum $\sum_{k=1}^{n-1}2^kS(n-1,k)$ counts the partitions of the set $\{2,3,\ldots,n\}$ having any number of blocks in which some subset of the blocks are marked.  We will denote the set of such partitions by $\pi(n)^*$.  Let $\pi=B_1/B_2/ \cdots/B_k \in \pi(n)^*$, $1 \leq k \leq n-1$, where the $B_i$ are arranged in \emph{ascending} order of smallest elements and some subset of the $B_i$ are marked.  Furthermore, we assume within each block $B_i$ that the elements are written in \emph{descending} order.  Finally, let $m_i$ denote the smallest element of block $B_i$, $1 \leq i \leq k$.

We now transform $\pi$ into a permutation of size $n$ whose flattened form avoids the pattern $23\dash1$. We start by writing the element $1$ in a cycle by itself as $(1\cdots)$. We first consider the block $B_1$.  If $B_1$ is not marked, then write the elements of $B_1$ in descending order after $1$ within its cycle to obtain the longer cycle $(1B_1\cdots)$.  If $B_1$ is marked, then write all elements of $B_1$ except for the last one in the cycle with $1$ and start a new cycle with $m_1=2$; at this point, one would have two cycles $(1\widetilde{B}_1\cdots)$ and $(2\cdots)$, where $\widetilde{B}_1=B_1-\{m_1\}$.

Continue in this fashion, inductively, as follows.  If $i \geq 2$ and block $B_i$ is not marked, then write all of the elements of $B_i$ at the end of the last current cycle, while if $B_i$ is marked, write all of the elements of $B_i$ except $m_i$ at the end of the last current cycle and then write the element $m_i$ in a cycle by itself.  Doing this for each of the $k$ blocks of $\pi$ yields a permutation $\sigma$ of length $n$ which avoids $23\dash1$ such that $\text{Flatten}(\sigma)$ has exactly $k$ ascents.  The above procedure is seen to be reversible, and hence is a bijection, upon considering whether or not the smaller number in an ascent within $\text{Flatten}(\sigma)$ starts a new cycle of $\sigma$.

For example, if $n=10$ and $\pi=\{6,5,2\},\{10,7,3\},\{4\},\{9,8\} \in \pi(10)^*$, with the second and third blocks marked, then the corresponding permutation in standard cycle form would be $\sigma=(1,6,5,2,10,7),~(3),~(4,9,8)$.

For Corollary \ref{cor:32-1:average}, we now define a bijection between permutations avoiding $23\dash1$ and those avoiding $32\dash1$ in the flattened sense.  To do so, given $\sigma$ avoiding $23\dash1$, let $t_1=1<t_2<\cdots<t_\ell$ denote the set of numbers consisting of the first (i.e., left) letters of the ascents of $\text{Flatten}(\sigma)$, going from left to right. Note that the $t_i$ are increasing since there is no occurrence of $23\dash1$ in $\text{Flatten}(\sigma)$.  Given $1 < i \leq \ell$, let $\alpha_i$ denote the sequence (possible empty) of numbers occurring between $t_{i-1}$ and $t_i$ in $\text{Flatten}(\sigma)$.  Note that the letters of an $\alpha_i$ must belong to the same cycle of $\sigma$, by definition of the $t_i$, since $\sigma$ is assumed to be in standard cycle form.  Write the sequence $\alpha_i$ in reverse order for each $i$, where the letters remain in the same cycle of $\sigma$.  If $\sigma'$ denotes the resulting permutation, then it may be v
 erified that the mapping $\sigma \mapsto \sigma'$ is the requested bijection.  For example, if $\sigma$ is as above, then the order of the letters between $1$ and $2$ and between $2$ and $3$ is reversed, which gives $\sigma'=(1,5,6,2,7,10),(3),(4,9,8)$.\linebreak

\indent Given a permutation $\rho=\rho_1\rho_2\cdots\rho_n$ which avoids $32\dash1$, the above mapping is reversed by considering the subsequence $\rho_{i_r}$ of $\text{Flatten}(\rho)$ where $\rho_{i_1}=1$ and $\rho_{i_r}$ is the smallest letter to the right of $\rho_{i_{r-1}}$ if $r>1$ and changing the order of the letters between $\rho_{i_{r-1}}$ and $\rho_{i_r}$ for each $r$. \hfill \qed\\

\noindent\textbf{Combinatorial proof of Corollary \ref{cor:31-2:average} (avoiding).}\\

We will show that a permutation avoids $31\dash2$ in the flattened sense if and only if it avoids $3\dash1\dash2$, whence the result will follow from Theorem 2.4 in \cite{MS} where a combinatorial proof was given.  Clearly, a permutation avoiding $3\dash1\dash2$ also avoids $31\dash2$.  So suppose a permutation $\sigma$ contains an occurrence of $3\dash1\dash2$ in the flattened sense.  We will show that it must contain an occurrence of $31\dash2$.  Let $\text{Flatten}(\sigma)=\sigma_1\sigma_2\cdots \sigma_n$, which we'll denote by $\sigma'$.

First suppose that there is at least one ascent to the right of $n$ in $\sigma'$.  Let $j$ denote the index of the left-most such ascent.  That is, there exist indices $i$ and $j$ with $i<j<n$ such that $\sigma_i=n$, $\sigma_i>\sigma_{i+1}>\cdots>\sigma_j$ and $\sigma_{j+1}>\sigma_j$.  Since $\sigma_j<\sigma_{j+1}<\sigma_i$, there exists an index $\ell$ with $i<\ell\leq j$ such that $\sigma_\ell<\sigma_{j+1}<\sigma_{\ell-1}$.  Then the subsequence $\sigma_{\ell-1}\sigma_{\ell}\sigma_{j+1}$ would be an occurrence of $31\dash2$ in $\sigma'$, which completes this case.

On the other hand, suppose there is no ascent in $\sigma'$ to the right of the letter $n$.  If $\sigma_i=n$, then $\sigma_i>\sigma_{i+1}>\cdots>\sigma_n$.  Apply now the reasoning of the previous paragraph on the subpermutation $\sigma_1\sigma_2\cdots \sigma_{i-1}$, considering instead of $n$, the largest element among the first $i-1$ positions of $\sigma'$.  If an occurrence of $31\dash2$ arises as before, then we are done.  Otherwise, continue with still a smaller subpermutation.  If no occurrence of $31\dash2$ arises before all of the positions of $\sigma'$ are exhausted, then it must be the case that there is the following decomposition of $\sigma'$:
$$\sigma'=T_rT_{r-1}\cdots T_1,$$
for some $r \geq 1$, where $T_1$ starts with the letter $n$ and is decreasing and $T_i$, $1 < i \leq r$, starts with the largest letter to the left of $T_{i-1}T_{i-2}\cdots T_1$ followed by a possibly empty decreasing sequence.

Now suppose $\sigma'$ contains an occurrence of $3\dash1\dash2$ consisting of the letters $x$, $y$, and $z$, respectively.  If $m_k=\max(T_k)$, $1 \leq k \leq r$, then $m_1>m_2>\cdots>m_r$, which implies that we may assume that $x$ and $y$ belong to the same block $T_i$ for some $i$, with $x=m_i$, and that $z$ belongs to a block $T_j$ for some $j$ with $j<i$.  (Note that if $x\in T_{i_1}$ and $y \in T_{i_2}$, where $i_1>i_2$, then we have $m_{i_2}>m_{i_1}\geq x>y$, with $m_{i_2}$ occurring to the left of $y$ in $\sigma'$.)  Since the letters are decreasing between $x$ and $y$, inclusive, with $x>z>y$, there must exist an occurrence of $31\dash2$ in $\sigma'$ where the $3$ and $1$ correspond to a pair of adjacent letters between $x$ and $y$ (and possibly including $x$ or $y$) and the $2$ corresponds to the letter $z$.  Thus, $\sigma'$ contains an occurrence of $31\dash2$ in all cases, which completes the proof.  \hfill \qed\\

Given a pattern $\tau$ of type $(2,1)$, we will refer to an occurrence of $\tau$ (in the flattened sense) in which the $2$ corresponds to the actual letter $i$ as an $i$-occurrence of $\tau$ and an occurrence in which the $2$ and $3$ correspond to the letters $i$ and $j$, respectively, as an $(i,j)$-occurrence.  In the proofs that follow, $\text{tot}(\tau)$ denotes the total number of occurrences of the pattern $\tau$ under consideration.  Furthermore, given positive integers $m$ and $n$, we let $[m,n]=\{m,m+1,\ldots,n\}$ if $n\geq m$, with $[m,n]=\emptyset$ if $m>n$.\\

\noindent\textbf{Combinatorial proofs of Corollaries \ref{cor:32-1:average} and \ref{cor:23-1:average} (average).}\\

We first treat the case $32\dash1$ and argue that the total number of $i$-occurrences of $32\dash1$ (in the flattened sense) within all of the permutations of size $n$ is given by $(n-i)\binom{i-1}{2}\frac{(n-1)!}{i}$ for $i \in [3,n-1]$.  Summing over $i$ would then give the total number of occurrences of $32\dash1$.

Note that within an $i$-occurrence of $32\dash1$, the letter $i$ cannot start a cycle since there is a letter to the right of it in the flattened form which is smaller.  Note further that the position of $j$ is determined by that of $i$'s within an $(i,j)$-occurrence of $32\dash1$, where $i+1\leq j \leq n$.  Given $i$ and $j$, we count the permutations of size $n$ for which there are exactly $r$ $(i,j)$-occurrences of $32\dash1$, respectively, where $1 \leq r \leq i-2$.  Note that the position of $i$ is determined within such permutations once the positions of the elements of $[i-1]$ have been, which also determines the position of $j$ (note that $i$ must be placed within a current cycle so that there are exactly $r$ members of $[i-1]$ to its right within the flattened form).

Thus, there are $\frac{(n-1)!}{i}$ such permutations for each $r$, which implies that the total number of $(i,j)$-occurrences of $32\dash1$ is given by
$$\sum_{r=1}^{i-2}r\frac{(n-1)!}{i}=\binom{i-1}{2}\frac{(n-1)!}{i}.$$
Since there are $n-i$ choices for $j$, given $i$, with each choice yielding the same number of $(i,j)$-occurrences of $32\dash1$, it follows that there are $(n-i)\binom{i-1}{2}\frac{(n-1)!}{i}$ $i$-occurrences of $32\dash1$, as desired.  Summing over $3 \leq i \leq n-1$, and simplifying, then gives
\begin{align*}
\text{tot}(32\dash1)&=\sum_{i=3}^{n-1}(n-i)\binom{i-1}{2}\frac{(n-1)!}{i}=n!\sum_{i=3}^{n-1}\left(\frac{i-3}{2}+\frac{1}{i}\right)-(n-1)!\binom{n-1}{3}\\
&=\frac{n!}{2}\binom{n-3}{2}+n!\left(H_{n-1}-\frac{3}{2}\right)-(n-1)!\binom{n-1}{3}\\
&=\frac{n!}{12}(n^2-9n-4)+n!H_n.
\end{align*}
Dividing by $n!$ yields the average value formula found in Corollary \ref{cor:32-1:average}.

Writing the letter corresponding to $3$ directly after (instead of directly before) the letter corresponding to $2$ shows that the total number of $(i,j)$-occurrences of $23\dash1$ is the same as the total number of $(i,j)$-occurrences of $32\dash1$ for each $i$ and $j$, which implies Corollary \ref{cor:23-1:average}. \hfill \qed\\

\noindent\textbf{Combinatorial proof of Corollary \ref{cor:31-2:average} (average).}\\

Similar reasoning as in the prior proof shows that the total number of $i$-occurrences of $31\dash2$ in the flattened sense within all of the permutations of $[n]$ is given by $(n-i)\left(\binom{i}{2}-1\right)\frac{(n-1)!}{i}$ for $i \in [3,n-1]$.  To see this, first note that there are $n-i$ choices for the letter $j$ to play the role of the $3$, given $i$, within an occurrence of $31\dash2$. Let $\sigma \in \mathcal{S}_n$. Note that for each $r$, $1 \leq r \leq i-3$, there are $\frac{(n-1)!}{i}r$ permutations which have an $(i,j)$-occurrence of $31\dash2$ in which the letter $i$ comes somewhere between the $(r+1)$-st and $(r+2)$-nd members of $[i-1]$ from the left within $\text{Flatten}(\sigma)$, and $2\frac{(n-1)!}{i}(i-2)$ permutations in which $i$ occurs to the right of all the members of $[i-1]$ within $\text{Flatten}(\sigma)$.  Observe that in the latter case, the letter $i$ would either occur within a cycle whose first letter is a member of $[i-1]$ or as the first let
 ter of a cycle. Note that in all cases, the possible positions for $j$ are determined by the value of $r$ and is independent of the value of $i$.  Summing over $r$, the total number of $(i,j)$-occurrences of $31\dash2$ is thus given by
$$(1+2+\cdots+(i-3)+2(i-2))\frac{(n-1)!}{i}=\left(\binom{i}{2}-1\right)\frac{(n-1)!}{i}.$$

Summing over $i$, and simplifying, then implies
\begin{align*}
\text{tot}(31\dash2)&=\sum_{i=3}^{n-1}(n-i)\left(\binom{i}{2}-1\right)\frac{(n-1)!}{i}\\
&=\frac{n!}{2}\sum_{i=2}^{n-1}\left(i-1-\frac{2}{i}\right)-(n-1)!\left(\binom{n}{3}-(n-2)\right)\\
&=\frac{n!}{12n}(n^3-3n^2+26n-12)-n!H_n.
\end{align*}
Dividing by $n!$ yields the average value formula found in Corollary \ref{cor:31-2:average}. \hfill \qed\\

\noindent\textbf{Combinatorial proofs of Corollaries \ref{cor:12-3:average} and \ref{cor:21-3:average} (average).}\\

We first treat the case $21\dash3$.  To handle this case, we will simultaneously consider occurrences of the  pattern $3\dash21$.  If $i \in [3,n-1]$, first note that there are $(i-2)(n-1)!$ permutations $\sigma$ of size $n$ in which the letter $i$ directly precedes a member of $[i-1]$ in $\text{Flatten}(\sigma)$.  To show this, first insert $i$ directly before some member of $[2,i-1]$ within a permutation of $[i-1]$ expressed in standard cycle form.  Upon treating $i$ and the letter directly thereafter as a single object, we see that there are $\prod_{s=i+1}^n(s-1)$ choices for the positions of the elements of $[i+1,n]$ and thus the total number of such permutations is $(i-2)(i-1)!\prod_{s=i+1}^n(s-1)=(i-2)(n-1)!$, as claimed.  Within each of these permutations $\sigma$, every letter of $[i+1,n]$ contributes either an $i$-occurrence of $3\dash21$ or $21\dash3$ depending on whether the letter goes somewhere before or somewhere after $i$ within $\text{Flatten}(\sigma)$.  This 
 implies
$$\text{tot}(i\dash\text{occurrences~of } 21\dash3)+\text{tot}(i\dash\text{occurrences~of } 3\dash21)=(n-i)(i-2)(n-1)!, \qquad 3 \leq i \leq n-1,$$
and summing this over $i$ gives
\begin{equation}\label{cpr1}
\text{tot}(21\dash3)+\text{tot}(3\dash21)=(n-1)!\sum_{i=3}^{n-1}(n-i)(i-2).
\end{equation}

We now count the total number of occurrences of $3\dash21$ within permutations of size $n$, which is apparently easier.  We first count the number of permutations having an $(i,j)$-occurrence of $3\dash21$, where $3 \leq i < j \leq n$ are given.  To do so, we first create permutations of the set $[i]\cup\{j\}$ by writing some permutation of $[i-1]$ in standard cycle form and then deciding on the positions of the letters $i$ and $j$.  Either $i$ and $j$ can directly precede different members of $[2,i-1]$ or can precede the same member (in which case $i$ would come before $j$), whence there are $\binom{i-2}{2}+(i-2)=\binom{i-1}{2}$ choices regarding the placement of $i$ and $j$.  Upon treating $i$ and the letter directly thereafter as a single object and adding the remaining members $r$ of $[i+1,n]-\{j\}$, we see that the number of permutations of length $n$ having an $(i,j)$-occurrence of $3\dash21$ is
$$\binom{i-1}{2}(i-1)!\prod_{r=i+1}^{j-1}r \prod_{r=j+1}^n(r-1)=\binom{i-1}{2}\frac{(n-1)!}{i}.$$

Since there are $n-i$ choices for $j$, given $i$, the total number of $i$-occurrence of $3\dash21$ is then given by $(n-i)\binom{i-1}{2}\frac{(n-1)!}{i}.$  Summing over $3 \leq i \leq n-1$ gives
\begin{equation}\label{cpr2}
\text{tot}(3\dash21)=(n-1)!\sum_{i=3}^{n-1}\frac{n-i}{i}\binom{i-1}{2}.
\end{equation}
Subtracting \eqref{cpr2} from \eqref{cpr1} yields
\begin{align*}
\text{tot}(21\dash3)&=(n-1)!\sum_{i=3}^{n-1}(n-i)\left(i-2-\frac{1}{i}\binom{i-1}{2}\right)\\
&=\frac{n!}{2}\sum_{i=2}^{n-1}\left(i-1-\frac{2}{i}\right)-\frac{(n-1)!}{2}\sum_{i=2}^{n-1}(i^2-i-2)\\
&=\frac{n!}{12n}(n^3-3n^2+26n-12)-n!H_n,
\end{align*}
which completes the proof in the case $21\dash3$.

A similar proof may be given for the case $12\dash3$.  In fact, there are the comparable formulas
$$\text{tot}(12\dash3)+\text{tot}(3\dash12)=(n-1)!\sum_{i=2}^{n-1}(n-i)i$$
and
$$\text{tot}(3\dash12)=(n-1)!\sum_{i=2}^{n-1}\frac{n-i}{i}\left(\binom{i}{2}-1\right).$$
Subtracting, simplifying, and dividing by $n!$ gives the average value formula found in Corollary \ref{cor:12-3:average}.  \hfill \qed

\end{document}